\documentclass[a4paper,12pt]{article}

\usepackage[english]{babel}
\usepackage{hyperref}
\usepackage{amssymb,amsmath,amsthm,bm}
\usepackage{enumerate,enumitem}
\usepackage{graphicx,color}
\usepackage{epsfig}
\usepackage{tikz}
\usetikzlibrary{calc,decorations.pathmorphing,decorations.text,decorations.markings,matrix}
\usepackage{caption,subcaption}
\usepackage{refcount}
\usepackage[hmargin=2.5cm,vmargin=3cm]{geometry}
\usepackage{mathtools, braket}
\usepackage[normalem]{ulem}
\usepackage{authblk}
\usepackage{xcolor} 
\usepackage[square,sort,comma,numbers]{natbib}
\usepackage[thicklines]{cancel}
\usepackage{centernot}
\usepackage{indentfirst}
\usepackage[nameinlink]{cleveref}
\usepackage{booktabs}

\usepackage{array}
\allowdisplaybreaks[2]

\theoremstyle{plain}
\newtheorem{thm}{Thm}[section]

\newtheorem{claim}{Claim}[section]
\newtheorem{theorem}[thm]{Theorem}
\newtheorem{lemma}[thm]{Lemma}
\newtheorem{corollary}[thm]{Corollary}
\newtheorem{proposition}[thm]{Proposition}

\newtheorem{definition}[thm]{Definition}

\newenvironment{proof*}{\noindent \emph{Proof.}}{\hfill$\Diamond$}

\def\Z{\mathbb{Z}}

\def\SZ{S\Z}

\crefname{theorem}{Theorem}{Theorems}
\Crefname{theorem}{Theorem}{Theorems}

\title{Characterization of strongly $\mathbb{Z}_\ell$-connected graphs of small order}

\author{
{\normalsize 
			Jiaao Li\thanks{School of Mathematical Sciences and LPMC, Nankai University, Tianjin 300071, China; \texttt{lijiaao@nankai.edu.cn}}, 
			Bo Su\thanks{School of Mathematical Sciences and LPMC, Nankai University, Tianjin 300071, China; \texttt{suboll@mail.nankai.edu.cn}}, 
			Zhouningxin Wang\thanks{School of Mathematical Sciences and LPMC, Nankai University, Tianjin 300071, China; \texttt{wangzhou@nankai.edu.cn}},
			and Chunyan Wei\thanks{Department of Mathematical and Physical Sciences, Henan Institute of Engineering, Zhengzhou 451191, China;
            \texttt{yan1307015@163.com}}}}

\date{}

\begin{document}

\maketitle

\begin{abstract}
A graph is strongly $\Z_{\ell}$-connected if for each boundary function $\beta: V(G)\mapsto \Z_{\ell}$ with $\beta(v) \equiv d(v) \pmod{2}$ for every vertex $v$ and $\sum_{v \in V(G)} \beta(v) \equiv 0 \pmod{2\ell}$, there exists an orientation $D$ of $G$ such that $d_D^+(v) - d_D^-(v) \equiv \beta(v) \pmod{2\ell}$ for each $v \in V(G)$. This is a useful notion for studying circular flows of graphs. This note presents a fully self-contained, manual proof of a characterization of $4$-vertex strongly $\mathbb{Z}_\ell$-connected graphs for any integer $\ell\geq 2$, which will be used in our further study in this topic.
\end{abstract}

\section{Preliminaries}

The notion of strong $\mathbb{Z}_{\ell}$-connectivity for odd $\ell$ was studied in~\cite{LLLMMSZ14}, and was later extended to all integers $\ell$, including the even case; see~\cite{LLL17}.
When $\ell$ is odd, this general definition coincides with the one adopted in literature. This notion serves as a useful tool for studying circular flows of graphs.

This note provides a complete manual proof of the characterization of $4$-vertex strongly $\mathbb{Z}_{\ell}$-connected graphs for every integer $\ell$.  
In addition, we also give an alternative and simpler proof of the characterization of $3$-vertex strongly $\mathbb{Z}_{\ell}$-connected graphs in Section~\ref{sec:3-vertex-SZl}. 

We begin with some basic definitions and notation.
Let $G$ be a graph with vertex set $V(G)$. 
Denote by $v(G)$ and $e(G)$ the numbers of vertices and edges of $G$, respectively, and by $\delta(G)$ the minimum degree of $G$.  
For each $v \in V(G)$, let $d(v)$ denote its degree. 
A graph with exactly $k$ vertices is called a \emph{$k$-vertex graph}.

For disjoint sets $X,Y \subseteq V(G)$, let $[X,Y]_G$ denote the set of edges with one endpoint in $X$ and the other in $Y$. When $G$ is clear from context, we omit the subscript. 
If $X=\{x\}$, we write $[x,Y]$ instead of $[\{x\},Y]$; similarly, if $Y=\{y\}$, we write $[x,y]$. 
For each $X \subseteq V(G)$, let $d(X) := |[X,X^c]|$, which we call the \emph{degree of $X$}.

For each pair $x,y \in V(G)$, let $\mu_G(x,y) := |[x,y]_G|$, called the \emph{multiplicity} of $[x,y]$, that is, the number of parallel edges between $x$ and $y$. 
The \emph{multiplicity} of $G$ is defined as
\[
\mu(G) := \max\{\mu_G(x,y) : x,y \in V(G)\}.
\]

Given an orientation $D$ of $G$, let $E_D^+(v)$ (resp., $E_D^-(v)$) denote the set of arcs with tail (resp., head) at $v$, and let $d_D^+(v) := |E_D^+(v)|$ and $d_D^-(v) := |E_D^-(v)|$. 
When $D$ is clear from context, we omit the subscript.

Given an edge $e \in E(G)$, \emph{contracting} $e$ means identifying its endpoints and deleting the resulting loop; the resulting graph is denoted by $G/e$. 
More generally, if $H \subseteq G$, then contracting all edges of $H$ yields the graph $G/H$ (or $G/E(H)$). For a partition $\mathcal{P}$ of $V(G)$ (that is, a collection of pairwise disjoint nonempty subsets whose union is $V(G)$), let $|\mathcal{P}|$ denote the number of parts. 
The graph $G/\mathcal{P}$ is obtained by identifying all vertices in each part of $\mathcal{P}$ into a single vertex and deleting any resulting loops.

To \emph{lift} a path $P = v_0 v_1 \dots v_n$ in $G$, we delete all edges of $P$ and add a new edge $v_0 v_n$.

We also introduce two families of small graphs that will appear repeatedly. 
For an integer $a \ge 1$, let $aH$ denote the graph obtained from $H$ by replacing each edge with $a$ parallel edges. 
For integers $a,b,c \ge 0$, let $T_{a,b,c}$ denote the graph on three vertices in which each pair of vertices is joined by $a$, $b$, and $c$ parallel edges, respectively (see \Cref{fig:aK2,fig:Tabc}).

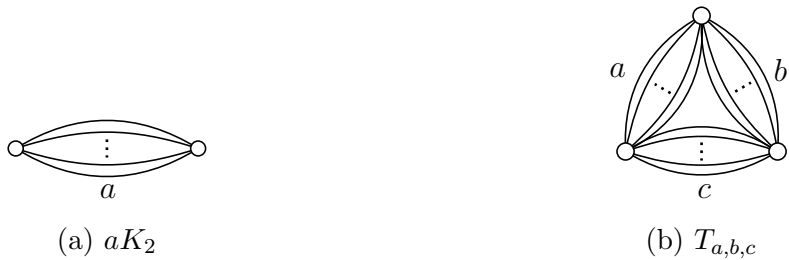
\begin{figure}[!htbp]
    \centering
	\begin{subfigure}[t]{.48\textwidth}
		\centering
		\begin{tikzpicture}[scale=0.4]		
			\draw [line width=1pt, dotted] (0, 0.3) to (0, -0.3);
			\draw(0.8, -1.4) node[left=0.5mm]  {$a$};
			\draw [bend left=18, line width=0.6pt, black] (-3,0) to (3,0);
			\draw [bend right=18, line width=0.6pt, black] (-3,0) to (3,0);
			\draw [bend left=32, line width=0.6pt, black] (-3,0) to (3,0);
			\draw [bend right=32, line width=0.6pt, black] (-3,0) to (3,0);
				
			\draw [fill=white,line width=0.6pt] (-3,0) node[left=0.5mm] { } circle (7pt);  
			\draw [fill=white,line width=0.6pt] (3,0) node[right=0.5mm] { } circle (7pt);  
			\end{tikzpicture}
			\caption{$aK_2$}
			\label{fig:aK2}     
		\end{subfigure}
		\begin{subfigure}[t]{.48\textwidth}
			\centering
			\begin{tikzpicture}[scale=0.4]			
				\draw [rotate=240] [line width=1pt, dotted] (1, -0.6) to (1, -1.2);
				\draw [bend left=20, line width=0.6pt, black] (0,2) to  (-2.5,-2.5) ;
				\draw [bend right=18, line width=0.6pt, black] (0,2) to  (-2.5,-2.5) ;
				\draw [bend left=34, line width=0.6pt, black] (0,2) to  (-2.5,-2.5) ;
				\draw [bend right=32, line width=0.6pt, black] (0,2) to  (-2.5,-2.5) ;

				\draw [line width=1pt, dotted] (0, -2.2) to (0, -2.8);
				\draw [bend left=20, line width=0.6pt, black]  (-2.5,-2.5)  to (2.5,-2.5);
				\draw [bend right=18, line width=0.6pt, black]  (-2.5,-2.5)  to (2.5,-2.5);
				\draw [bend left=34, line width=0.6pt, black]  (-2.5,-2.5)  to (2.5,-2.5);
				\draw [bend right=32, line width=0.6pt, black]  (-2.5,-2.5)  to (2.5,-2.5);

				\draw [rotate=120] [line width=1pt, dotted](-1, -0.7) to (-1, -1.3);
				\draw [bend left=20, line width=0.6pt, black] (0,2) to (2.5,-2.5); 
				\draw [bend right=18, line width=0.6pt, black] (0,2) to (2.5,-2.5);
				\draw [bend left=34, line width=0.6pt, black] (0,2) to (2.5,-2.5);
				\draw [bend right=32, line width=0.6pt, black] (0,2) to (2.5,-2.5);
				
				\draw [fill=white,line width=0.6pt] (0,2) node[above=0.5mm] { } circle (8pt); 
				\draw [fill=white,line width=0.6pt] (2.5,-2.5) node[right=0.5mm] { } circle (8pt);     
				\draw [fill=white,line width=0.6pt] (-2.5,-2.5) node[left=0.5mm] { } circle (8pt);   
				\draw  (-2,0.2) node[left=0.5mm] {$a$};   
				\draw(0.8,-3.8) node[left=0.5mm] {$c$};  
				\draw  (3.3,0.2) node[left=0.5mm] {$b$};  
			\end{tikzpicture}
			\caption{$T_{a,b,c}$}
			\label{fig:Tabc}
		\end{subfigure} 

    \caption{The graphs $aK_2$, $T_{a,b,c}$.}
    \label{fig:small graph with v(G)=4}
\end{figure}

\begin{definition}\label{def:(2ell,beta)-boundary_orientation}
Let $G$ be a graph.
\begin{enumerate}[label=(\arabic*), itemsep=0em]
	\item
	Let $\beta \colon V(G) \to \{0,1,\ldots,2\ell-1\}$ be a mapping.
	If $\beta(v) \equiv d(v) \pmod{2}$ for every vertex $v$ and
	\[
	\sum_{v \in V(G)} \beta(v) \equiv 0 \pmod{2\ell},
	\]
	then $\beta$ is called a \emph{parity-compliant $\mathbb{Z}_{2\ell}$-boundary} of $G$,
	or simply a \emph{$\mathbb{Z}_{2\ell}$-boundary}.

	\item
	Given a $\mathbb{Z}_{2\ell}$-boundary $\beta$ of $G$, an orientation $D$ of $G$
	is called a \emph{$\beta$-orientation} if
	\[
	d_D^+(v) - d_D^-(v) \equiv \beta(v) \pmod{2\ell}
	\]
	for every vertex $v$.

	\item
	The graph $G$ is \emph{strongly $\mathbb{Z}_{\ell}$-connected} if, for every
	$\mathbb{Z}_{2\ell}$-boundary $\beta$ of $G$, there exists a $\beta$-orientation
	of $G$.
\end{enumerate}
\end{definition}

For convenience, we write $G \in \mathrm{S}\mathbb{Z}_{\ell}$
to indicate that $G$ is strongly $\mathbb{Z}_{\ell}$-connected.
Such graphs satisfy several properties, some of which we list below. 

\begin{proposition}\label{prop:SZ5-property}
Fix a graph $G$ and a subgraph $H \subseteq G$.
Let $\ell$ be an integer with $\ell \ge 3$.
Then the following statements hold:
\begin{enumerate}[label=(\arabic*), itemsep=0em]
    \item
    $K_1 \in \mathrm{S}\mathbb{Z}_{\ell}$.

    \item\label{thm:SZl-contraction}\emph{\cite{L08,LLL17}}
    If $G \in \mathrm{S}\mathbb{Z}_{\ell}$, then both $G / e$ and $G + e'$
    belong to $\mathrm{S}\mathbb{Z}_{\ell}$ for each edge $e \in E(G)$
    and each new edge $e'$ with endpoints in $V(G)$ but $e' \notin E(G)$.

    \item\label{thm:SZl-contraction-reserve}\emph{\cite{L08,LLL17}}
    If $H \in \mathrm{S}\mathbb{Z}_{\ell}$ and $G / H \in \mathrm{S}\mathbb{Z}_{\ell}$,
    then $G \in \mathrm{S}\mathbb{Z}_{\ell}$.

    \item\label{thm:SZl-lifting}
    If $G'$ is obtained from $G$ by lifting certain paths and
    $G' \in \mathrm{S}\mathbb{Z}_{\ell}$, then $G \in \mathrm{S}\mathbb{Z}_{\ell}$.

    \item\label{thm:l-1spanning}\emph{\cite{LLL17}}
    If $G \in \mathrm{S}\mathbb{Z}_{\ell}$, then $G$ contains at least
    $\ell - 1$ edge-disjoint spanning trees.
    In particular, $\delta(G) \ge \ell - 1$ and
    $e(G) \ge (|V(G)| - 1)(\ell - 1)$.
\end{enumerate}
\end{proposition}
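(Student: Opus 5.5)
We treat the five items separately. Items \ref{thm:SZl-contraction}, \ref{thm:SZl-contraction-reserve} and \ref{thm:l-1spanning} are cited from \cite{L08,LLL17}, but the first three items and the lifting property reduce to short boundary-transfer arguments, which we outline.

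\emph{Item (1).} For $K_1$ the only vertex $v$ has $d(v)=0$. The sum condition forces $\beta(v)\equiv 0 \pmod{2\ell}$, and the empty orientation realizes this boundary.

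\emph{Item \ref{thm:SZl-contraction}.} For $G+e'$ with $e'=xy$, fix an arbitrary direction $x\to y$ of $e'$ and, given a $\mathbb{Z}_{2\ell}$-boundary $\beta'$ of $G+e'$, set $\beta=\beta'$ except $\beta(x)=\beta'(x)-1$ and $\beta(y)=\beta'(y)+1$. The parities of $\beta$ match the degrees in $G$ and the sum is unchanged, so a $\beta$-orientation of $G$ together with $x\to y$ is a $\beta'$-orientation of $G+e'$. For $G/e$ with $e=uv$ contracted to $w$, we have $d(w)=d(u)+d(v)-2$. Given a boundary $\beta$ of $G/e$, define $\tilde\beta$ on $G$ by letting $\tilde\beta(u)$ be any value with the parity of $d(u)$, setting $\tilde\beta(v)=\beta(w)-\tilde\beta(u)$, and letting $\tilde\beta$ agree with $\beta$ elsewhere. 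Then $\tilde\beta(v)$ has the parity of $d(v)$ and the total sum is preserved. Take a $\tilde\beta$-orientation of $G$ and delete $e$: the arc $e$ contributes $+1$ and $-1$ at $u$ and $v$, so these cancel in the net outflow of $w$, and the result is a $\beta$-orientation of $G/e$.

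\emph{Item \ref{thm:SZl-contraction-reserve}.} Given a boundary $\beta$ of $G$, give the contracted vertex of $G/H$ the value $\sum_{v\in V(H)}\beta(v)$, which has the correct parity since $d_{G/H}(V(H))\equiv\sum_{v\in V(H)} d_G(v) \pmod 2$. Orient $G/H$, that is, all edges of $E(G)\setminus E(H)$, accordingly. For $v\in V(H)$ put $\beta_H(v)=\beta(v)-\big(d^+(v)-d^-(v)\big)$, where the net outflow is computed over the already oriented edges. The parity of $\beta_H(v)$ is that of $d_H(v)$, and $\sum_{v\in V(H)}\beta_H(v)\equiv 0 \pmod{2\ell}$ because the outside edges' net flow out of $V(H)$ equals the value assigned to the contracted vertex modulo $2\ell$. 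An $H$-orientation for $\beta_H$ then completes a $\beta$-orientation of $G$.

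\emph{Item \ref{thm:SZl-lifting}.} Lifting a path $v_0\dots v_n$ lowers the degree of each internal vertex by $2$ and preserves the parity of all degrees, so $G$ and $G'$ have the same $\mathbb{Z}_{2\ell}$-boundaries. Given a $\beta$-orientation of $G'$ in which the new edge is $v_0\to v_n$, orient the path $v_0\to v_1\to\dots\to v_n$. Each internal vertex gains net $0$ and the endpoints keep their contributions, so we obtain a $\beta$-orientation of $G$.

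\emph{Item \ref{thm:l-1spanning}.} This is the main obstacle. The degree part is easy. At a vertex $v$ the value $d^+(v)-d^-(v)$ takes only $d(v)+1$ values, while the boundary must be able to take all $\ell$ residues of the right parity modulo $2\ell$. Hence $d(v)\ge \ell-1$: one prescribes a residue not attained at $v$ and completes $\beta$ at another vertex so that the sum condition holds. For the spanning trees we would use the Nash-Williams--Tutte criterion: it suffices that $e(G/\mathcal P)\ge(\ell-1)(|\mathcal P|-1)$ for every partition $\mathcal P$. Since $G/\mathcal P\in\mathrm{S}\mathbb{Z}_\ell$ by \ref{thm:SZl-contraction}, this reduces to the edge bound $e(G)\ge (v(G)-1)(\ell-1)$ for every strongly $\mathbb{Z}_\ell$-connected graph. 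Here there are $\ell^{\,v(G)-1}$ boundaries, and one must show that fewer edges give fewer attainable boundary vectors $\big(d^+(v)\bmod \ell\big)_{v}$. We would try an injection from attainable vectors into a product of ranges of sizes at most $d(v)+1$ over a suitably ordered or peeled vertex sequence. If that falls short, we would defer to the argument of \cite{LLL17}.
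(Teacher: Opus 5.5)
Your arguments for (1)--(4) are correct. They are the standard boundary-transfer proofs. The paper gives no proof of its own for these items: it cites \cite{L08,LLL17} for (2) and (3) and treats (1) and (4) as routine.

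There are a few bookkeeping points.
\begin{itemize}
\item In (3), edges of $E(G)\setminus E(H)$ with both ends in $V(H)$ are not edges of $G/H$, since they become loops and are deleted. So orienting $G/H$ does not orient them. Orient them arbitrarily before defining $\beta_H$. They contribute $0$ to the net outflow of $V(H)$, so your parity and sum checks still hold.
\item Also in (3), $G/H$ has a single contracted vertex only because $H\in\mathrm{S}\mathbb{Z}_\ell$ is connected. A disconnected graph has a boundary with nonzero sum on some component, and that boundary admits no orientation.
\item In (5), the degree argument needs a second vertex to absorb the sum condition; $K_1$ itself has $\delta=0$.
\item Also in (5), when a part of $\mathcal P$ induces a disconnected subgraph, $G/\mathcal P$ arises by first adding edges and then contracting. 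Both operations are covered by (2).
\end{itemize}

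The genuine gap is in (5). Your reduction, via Nash-Williams--Tutte and closure under contraction, to the inequality $e(G)\ge (v(G)-1)(\ell-1)$ for every $G\in\mathrm{S}\mathbb{Z}_\ell$ is fine. But that inequality is the entire content of (5), and you do not prove it.

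Moreover, the injection you propose cannot work. To turn $e(G)<(n-1)(\ell-1)$ into a count below $\ell^{\,n-1}$, the ranges must have sizes $\min\{d_i+1,\ell\}$ for $n-1$ vertices with $\sum_i d_i=e(G)$. This is the peeling version, where $d_i$ is the degree of $v_i$ in $G-\{v_1,\dots,v_{i-1}\}$. Ranges of size $d(v)+1$ with full degrees only reproduce $\delta(G)\ge \ell-1$.

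Already for $G=K_3$ and $\ell=3$, where $e(G)=3<2(\ell-1)$, the peeling version fails. The eight orientations give seven distinct vectors $(d^+(v)\bmod 3)_v$: the six permutations of $(2,1,0)$ together with $(1,1,1)$. Any such product is at most $3\cdot 2=6$. So attainable boundaries do not inject into a product of vertex-by-vertex ranges of this kind. The conclusion $7<9=\ell^{\,n-1}$ still holds here, but not by such an injection.

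The edge bound needs a genuinely different argument. As written, your item (5) rests on \cite{LLL17}, just as the paper's does, and you should present it as a citation rather than as a proof strategy.
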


For graphs $G$ with $v(G) \le 4$, the following results were proved
in~\cite{LSWW24}.

\begin{theorem}[\cite{LSWW24}]\label{thm:small-SZl-other}
Let $\ell$ be an integer with $\ell \ge 3$.
\begin{enumerate}[label=(\arabic*), itemsep=0em]
\item\label{thm:SZl-small-2vertices}
The graph $aK_2$ belongs to $\SZ_\ell$ if and only if it contains
$\ell - 1$ edge-disjoint spanning trees, that is, if and only if
$a \ge \ell - 1$.

\item\label{thm:SZl-small-3vertices}
The graph $T_{a,b,c}$ belongs to $\SZ_\ell$ if and only if it contains
$\ell - 1$ edge-disjoint spanning trees, that is,
$a + b + c \ge 2\ell - 2$ and $\delta(G) \ge \ell - 1$.

\item\label{thm:SZl-small-4vertices}
Let $G$ be a graph with $v(G) = 4$.
If $e(G) \ge 3\ell - 2$, $\mu(G) \le \ell - 2$, and
$\delta(G) \ge \ell - 1$, then $G \in \SZ_\ell$.
\end{enumerate}
\end{theorem}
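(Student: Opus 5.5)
The necessity in parts (1) and (2) follows from Proposition~\ref{prop:SZ5-property}\ref{thm:l-1spanning}. So the work is in the sufficiency directions. I plan to treat each part by reducing the existence of a $\beta$-orientation to a covering statement about arithmetic progressions in $\mathbb{Z}_{2\ell}$.

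\emph{Part (1).} Write the ends as $u,v$. If $k$ of the $a$ edges are oriented $u\to v$, then $d^+(u)-d^-(u)=2k-a$ and $v$ receives the negative. As $k$ ranges over $0,\dots,a$, the values $2k-a$ form $a+1\ge \ell$ consecutive terms of step $2$. The residues mod $2\ell$ of a fixed parity form a cyclic group of order $\ell$ (a copy of $\mathbb{Z}_\ell$), so these values cover every residue of the parity of $a$. Since $\beta(u)\equiv a \pmod 2$ and $\beta(v)\equiv-\beta(u)$, some $k$ works.

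\emph{Part (2).} Let the vertices be $x,y,z$ with $\mu(x,y)=a$, $\mu(x,z)=b$, $\mu(y,z)=c$. Let $t_1,t_2,t_3$ be the net outflows along the three bundles, measured from $x$ to $y$, from $x$ to $z$, and from $y$ to $z$. By part (1), $t_1$ ranges over a step-$2$ progression $A$ of $a+1$ terms, and similarly $t_2\in B$ and $t_3\in C$ with $b+1$ and $c+1$ terms. The constraints are $t_1+t_2=\beta(x)$ and $-t_1+t_3=\beta(y)$; the condition at $z$ then follows from $\sum\beta\equiv 0$. So we need a point in
\[
A\cap(\beta(x)-B)\cap(C+\beta(y)).
\]
Inside the parity class, identified with $\mathbb{Z}_\ell$, these are three arcs, each of length at most $\ell$. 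If some multiplicity is at least $\ell-1$, its arc is the whole class and the condition reduces to two arcs meeting. Alternatively, in that case I would contract that bundle using Proposition~\ref{prop:SZ5-property}\ref{thm:SZl-contraction-reserve} together with part (1). Otherwise the total arc length is $a+b+c+3\ge 2\ell+1>2\ell$. Averaging the coverage over the $\ell$ points shows that some point is covered three times. This gives the orientation.

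\emph{Part (3).} First I try a vertex-deletion reduction. Choose $w$ and orient its $d(w)\ge\ell-1$ edges to realise $\beta(w)$; by the argument of part (1) this is possible for any split among its neighbours. The remaining graph $G-w$ is a $T_{a,b,c}$ with boundary $\beta$ corrected by the contributions of these edges, so it suffices that $G-w$ satisfies part (2). That requires $e(G)-d(w)\ge 2\ell-2$ and every vertex of $G-w$ keeping degree at least $\ell-1$. The first holds whenever $d(w)\le \ell$. If instead every vertex has degree at least $\ell+1$, then I pick $w$ of minimum degree and use $\mu\le\ell-2$ to control the degree loss.

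When no vertex works directly, the approach is to keep the freedom in how $w$'s edges split. Parametrise by the cycle space, which has dimension $3$: choose free variables on three bundles forming a cycle basis, and express the other three bundles' flows through them. The requirement then becomes that one variable lies in the intersection of arcs. I expect the main obstacle to be the case where some vertex has small degree, about $\ell-1$, while one of its neighbours loses too much degree in $G-w$. In that case I would first try to choose the split at $w$ so as to push as much of the burden as possible onto the neighbour with the largest degree, and reapply the averaging or arc-intersection argument using $e(G)\ge 3\ell-2$ and $\mu\le \ell-2$. Failing that, I would fall back on a case analysis on the degree sequence.
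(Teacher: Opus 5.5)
Parts (1) and (2) are correct. Part (2) is essentially the paper's argument in Section~\ref{sec:3-vertex-SZl}: the admissible bundle flows are arcs in one parity class of $\mathbb{Z}_{2\ell}$, and your averaging is Lemma~\ref{lem:intersection} with $m=3$. Two small points. First, $-t_1+t_3=\beta(y)$ gives $t_1\in C-\beta(y)$, not $C+\beta(y)$; this is harmless. Second, in the subcase with a bundle of multiplicity $\ge\ell-1$, the other two arcs meet because $b+c=d(z)\ge\ell-1$. That is where $\delta\ge\ell-1$ enters, which the paper's displayed estimate uses only implicitly.

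The genuine gap is part (3), which the proposal does not prove. Your deletion step needs a vertex $w$ with $e(G)-d(w)\ge 2\ell-2$, and this is impossible whenever $\delta(G)>e(G)-2\ell+2$. For $e(G)=3\ell-2$ that is exactly the case $\delta(G)\ge\ell+1$, which you propose to handle by taking $w$ of minimum degree and using $\mu\le\ell-2$. But then $e(G-w)\le 2\ell-3$ for every $w$, so $G-w\notin\mathrm{S}\mathbb{Z}_\ell$ by Proposition~\ref{prop:SZ5-property}\ref{thm:l-1spanning}, however degrees are controlled. Such balanced graphs exist for every $\ell\ge4$; for example, $\ell=5$ and $G=2K_4$ plus one edge, with $e(G)=13$, $\mu(G)=3$ and degrees $7,7,6,6$. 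These are the heart of the matter.

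The fallback does not fill this gap. The six bundle flows have three free cycle parameters, and the constraints become ``one variable in an intersection of arcs'' only after two of them are fixed. Choosing those two is the whole difficulty, and no argument is given for it. Moreover, Theorem~\ref{thm:4-vertex-char} shows the statement is sharp: $W_1,W_2$ for $\ell=5$ satisfy every hypothesis of part (3) except that $e(G)=3\ell-3$. So any replacement argument must use the single extra edge essentially, and you never identify such a step.

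The missing idea is the machinery of the paper's proof of Theorem~\ref{thm:4-vertex-char}. The paper imports part (3) from \cite{LSWW24}, but that argument runs verbatim under $e(G)\ge 3\ell-2$.
\begin{itemize}
\item Pass from $\beta$ to its $\gamma$-function and apply Hakimi's Theorem~\ref{thm:Hakimi}, so only bad sets must be excluded.
\item A bad singleton $v$ has $d(v)\le 2\ell-3$. By Theorem~\ref{thm:m-tree-connected} you can lift pairs of edges at $v$ so that the other three vertices carry $\ell-1$ edge-disjoint spanning trees, hence form an $\mathrm{S}\mathbb{Z}_\ell$ graph by part (2). Meanwhile $v$ keeps at least $2\ell-2-d(v)\ge 2\ell-|\gamma(v)|$ edges of the right parity, enough to realise this particular $\beta(v)$. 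This lifting step is the correct version of your deletion idea, but it is only needed for bad singletons.
\item The balanced graphs above are handled by the two-vertex analysis. The ordering $\gamma(v_1)\ge\cdots\ge\gamma(v_4)$ with $\gamma(v_1)\ge-\gamma(v_4)$ leaves only $\{v_1,v_2\}$ possibly bad, and parity then forces $d(\{v_1,v_2\})\le 2\ell-2$.
\item Hence $d(\{v_1,v_3\})+d(\{v_1,v_4\})=2e(G)-d(\{v_1,v_2\})\ge 4\ell-2$, so some $d(\{v_1,v_x\})\ge 2\ell-1$. Subtracting $2\ell$ at $v_1$ and adding $2\ell$ at the remaining vertex of $\{v_3,v_4\}$ removes every bad set, as in Claim~\ref{claim:cut-size-2-exact}.
\end{itemize}
With $e(G)=3\ell-3$ the same count gives only $4\ell-4$, which is exactly where the exceptional graphs arise. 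This is precisely where the hypothesis $e(G)\ge 3\ell-2$ is consumed.
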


Note that if a graph $G$ with four vertices belongs to $\mathrm{S}\mathbb{Z}_{\ell}$,
then by Proposition~\ref{prop:SZ5-property}\ref{thm:l-1spanning},
it must satisfy $e(G) \ge 3\ell - 3$ and $\delta(G) \ge \ell - 1$.
Moreover, if $G$ contains a $\mu$-multiple edge $[x,y]$ with $\mu \ge \ell - 1$,
then by
Proposition~\ref{prop:SZ5-property}\ref{thm:SZl-contraction},
Proposition~\ref{prop:SZ5-property}\ref{thm:SZl-contraction-reserve},
and
Theorem~\ref{thm:small-SZl-other}\ref{thm:SZl-small-2vertices},
we have that
$G \in \mathrm{S}\mathbb{Z}_{\ell}$ if and only if
$G / [x,y] \in \mathrm{S}\mathbb{Z}_{\ell}$.
This reduces the problem to a smaller case that has already been settled by induction.
Therefore, it suffices to consider graphs $G$ with $\mu(G) \le \ell - 2$.

Consequently, Theorem~\ref{thm:small-SZl-other}\ref{thm:SZl-small-4vertices}
covers most cases, leaving only those with $e(G) = 3\ell - 3$ unresolved.
We now focus on this remaining case.

Before proceeding, we recall the following lemma from~\cite{LSWW24}.

\begin{lemma}[\cite{LSWW24}]
Let $\ell \ge 3$ be an integer and let $\beta$ be a $\mathbb{Z}_{2\ell}$-boundary
of a graph $G$.
Then there exists an integer-valued function
$\gamma \colon V(G) \to \{0, \pm1, \ldots, \pm(2\ell - 1)\}$
satisfying the following properties:
\begin{enumerate}[label=(\arabic*), itemsep=0em]
\item
For each vertex $v \in V(G)$,
\[
\gamma(v) \equiv \beta(v) \pmod{2\ell}
\quad \text{and} \quad
\gamma(v) \equiv d(v) \pmod{2}.
\]
\item
$
\sum_{v \in V(G)} \gamma(v) = 0.
$
\item
$
\max_{v \in V(G)} \gamma(v)
-
\min_{v \in V(G)} \gamma(v)
\le 2\ell.
$
\end{enumerate}
Such a function $\gamma$ is called the \emph{corresponding $\gamma$-function}
of $\beta$.
\end{lemma}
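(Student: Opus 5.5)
We construct $\gamma$ in two stages: first choose a representative of each residue class, then adjust the representatives one vertex at a time until the sum is $0$. All adjustments keep parity and residue, so property (1) will hold automatically.

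\emph{Stage 1: initial representatives.} For each vertex $v$, take $\gamma_0(v)$ to be the unique integer in $(-\ell,\ell]$ with $\gamma_0(v)\equiv\beta(v)\pmod{2\ell}$. Since $2\ell$ is even, this integer also has the parity of $\beta(v)$, which is the parity of $d(v)$. So $\gamma_0$ satisfies (1), and every value lies in $(-\ell,\ell]$. Let $S=\sum_v\gamma_0(v)$. Since $\sum_v\beta(v)\equiv 0\pmod{2\ell}$, we get $S=2\ell k$ for some integer $k$. We also have $|S|\le n\ell$, so $|k|\le n/2$.

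\emph{Stage 2: shifting.} By symmetry (replace $\beta$ by $-\beta$) we may assume $k>0$. Sort the vertices so that $\gamma_0(v_1)\ge\gamma_0(v_2)\ge\cdots$, and replace $\gamma_0(v_i)$ by $\gamma_0(v_i)-2\ell$ for $i=1,\dots,k$. This is possible because $k\le n/2\le n$. Each shift changes the sum by exactly $-2\ell$, so the final sum is $0$, which is (2). The new values lie in $(-3\ell,-\ell]$ and the untouched values lie in $(-\ell,\ell]$, so every value lies in $\{0,\pm1,\dots,\pm(2\ell-1)\}$ as long as no shifted value reaches $-2\ell$ or below.

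\emph{Stage 3: the range bound (3).} Let $m=\gamma_0(v_k)$, the smallest shifted original value, and $M=\gamma_0(v_{k+1})$, the largest unshifted value. Because the list is sorted, $M\le m$.
\begin{itemize}
\item The new maximum is $\max(M,\gamma_0(v_1)-2\ell)$. Since $\gamma_0(v_1)-2\ell\le 0$ and $M\le m\le\ell$, the maximum is at most $\max(M,0)$.
\item The new minimum is $\min(m-2\ell,\gamma_0(v_n))$, which is at least $m-2\ell$ (note $\gamma_0(v_n)>-\ell\ge m-2\ell$).
\item If $M\ge0$, the range is at most $M-(m-2\ell)\le 2\ell$, using $M\le m$.
\item If $M<0$, the maximum is at most $0$ and the minimum is greater than $-3\ell$, so this case needs a separate check.
\end{itemize}

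\emph{The main obstacle.} The real difficulty is ruling out the case $M<0$, and also ruling out shifted values at or below $-2\ell$. Both require showing the shifted vertices had large original values, which must come from the sum condition. The argument is as follows.
\begin{itemize}
\item Since $\sum_v\gamma_0(v)=2\ell k$, and the $n-k$ unshifted values are each at most $M$ while the $k$ shifted values are each at most $\ell$, we get $2\ell k\le k\ell+(n-k)M$.
\item This gives $M\ge k\ell/(n-k)>0$, so $M>0$ and in particular $M\ge0$. Hence the range bound from Stage 3 applies.
\item It also gives $m\ge M>0$. Every shifted vertex satisfies $\gamma_0(v_i)\ge m>0$, so its new value is greater than $-2\ell$. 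Combined with the lower bound $-3\ell<$ new value $\le-\ell$, every shifted value lies in $(-2\ell,-\ell]$.
\end{itemize}
This completes the verification of (1), (2) and (3).
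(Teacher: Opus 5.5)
The paper gives no proof of this lemma. It is quoted from \cite{LSWW24} and used as a black box, so there is no in-paper argument to compare with.

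Your construction is a correct, self-contained proof. You reduce each $\beta(v)$ to its representative $\gamma_0(v)\in(-\ell,\ell]$, write $\sum_v\gamma_0(v)=2\ell k$, and shift the $k$ largest values down by $2\ell$. The key estimate $2\ell k\le k\ell+(n-k)M$ gives $m\ge M\ge 1$, so the shifted values lie in $[-(2\ell-1),-\ell]$, which is exactly what is needed.

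Three small points should be tidied up.

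\emph{The case $k=0$.} Your Stage 3 does not cover it, since there is no $v_k$. Here $\gamma=\gamma_0$ already works, with range at most $\ell-(-\ell+1)=2\ell-1$.

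\emph{The reduction ``replace $\beta$ by $-\beta$''.} This needs a sentence, because $(-\ell,\ell]$ is not symmetric. For $-\beta$ the new integer is $k'=-k+t$, where $t$ is the number of vertices with $\gamma_0(v)=\ell$. So $k<0$ still gives $k'>0$, and negating the function obtained for $-\beta$ gives one for $\beta$. Alternatively, mirror the argument by shifting the $|k|$ smallest values up by $2\ell$.

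\emph{The case split on the sign of $M$.} It is unnecessary. Every shifted value is at most $\ell-2\ell=-\ell<M$, so the new maximum is exactly $M$. Every unshifted value exceeds $-\ell\ge m-2\ell$, so the new minimum is exactly $m-2\ell$. The range is therefore $2\ell-(m-M)\le 2\ell$ directly, and the sum condition is only needed to guarantee $m\ge 1$.
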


This lemma transforms the problem of finding an orientation with a
$\mathbb{Z}_{2\ell}$-boundary into the problem of finding an orientation
with an integer-valued boundary function that satisfies certain conditions.
By applying the following theorem of Hakimi~\cite{H65}, this task can be further reduced
to checking whether every vertex subset satisfies a degree condition.

\begin{theorem}[Hakimi~\cite{H65}]\label{thm:Hakimi}
Let $G$ be a graph and let $\gamma \colon V(G) \to \mathbb{Z}$ be a function
such that $\gamma(v) \equiv d(v) \pmod{2}$ for every vertex $v \in V(G)$
and $\sum_{v \in V(G)} \gamma(v) = 0$.
Then the following statements are equivalent:
\begin{enumerate}[label=(\arabic*), itemsep=0em]
\item There exists an orientation $D$ of $G$ such that
$d_D^+(v) - d_D^-(v) = \gamma(v)$ for all $v \in V(G)$.
\item For every subset $S \subseteq V(G)$, it holds that
$\left|\sum_{v \in S} \gamma(v)\right| \le d(S)$.
\end{enumerate}
\end{theorem}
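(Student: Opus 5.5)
The implication (1)$\Rightarrow$(2) is a direct counting argument. The implication (2)$\Rightarrow$(1) I would prove by an augmenting-path argument: start from an arbitrary orientation and repeatedly reverse directed paths until the prescribed net outdegrees are reached. A stuck configuration will produce a set $S$ violating (2).

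\textbf{(1)$\Rightarrow$(2).} Let $D$ be an orientation with $d_D^+(v)-d_D^-(v)=\gamma(v)$ for all $v$, and fix $S\subseteq V(G)$. Summing over $S$, each arc with both ends in $S$ contributes $+1$ at its tail and $-1$ at its head, so it cancels. Hence $\sum_{v\in S}\gamma(v)$ equals the number of arcs of $[S,S^c]$ leaving $S$ minus the number entering $S$. In absolute value this is at most $|[S,S^c]|=d(S)$.

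\textbf{(2)$\Rightarrow$(1).} Take any orientation $D$ and define the excess $e(v):=d_D^+(v)-d_D^-(v)-\gamma(v)$.
\begin{itemize}
\item Since $d_D^+(v)-d_D^-(v)\equiv d(v)\equiv\gamma(v)\pmod 2$, every $e(v)$ is even.
\item Since $\sum_v (d_D^+(v)-d_D^-(v))=0=\sum_v\gamma(v)$, we have $\sum_v e(v)=0$.
\end{itemize}
Let $\Phi(D)=\sum_v|e(v)|$. It suffices to show that whenever $\Phi(D)>0$ we can modify $D$ so that $\Phi$ strictly decreases; iterating then yields $e\equiv 0$, which is (1).

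Suppose $\Phi(D)>0$. Because the excesses sum to zero, there is a vertex $u$ with $e(u)>0$, and by parity $e(u)\ge 2$. Let $S$ be the set of vertices reachable from $u$ by directed paths in $D$.

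\emph{Case 1: some $w\in S$ has $e(w)<0$.} Then $e(w)\le -2$. Reverse a directed $u$–$w$ path. Internal vertices keep the same net outdegree, the net outdegree of $u$ drops by $2$, and that of $w$ rises by $2$. So $|e(u)|$ and $|e(w)|$ each decrease by $2$, and $\Phi$ drops by $4$.

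\emph{Case 2: $e(v)\ge 0$ for all $v\in S$.} By the choice of $S$, no arc leaves $S$, so every edge of $[S,S^c]$ is directed into $S$. Hence $\sum_{v\in S}(d_D^+(v)-d_D^-(v))=-d(S)$. It follows that
\[
\sum_{v\in S}\gamma(v) = -d(S)-\sum_{v\in S}e(v)\le -d(S)-e(u)<-d(S),
\]
which contradicts (2). So Case 2 cannot occur, and the descent always succeeds.

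The only step needing care is Case 2: choosing $S$ as the reachability set is exactly what makes the cut one-directional, and this is where hypothesis (2) is used. The rest is bookkeeping, with the parity assumption guaranteeing that each path reversal changes the two endpoint excesses by exactly $2$ without overshooting.
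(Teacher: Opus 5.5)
The paper gives no proof of this statement to compare with. It quotes Hakimi's theorem from~\cite{H65} and uses it as a black box, invoking (2)$\Rightarrow$(1) in Claim~\ref{claim:cut-size-2-exact} and (1)$\Rightarrow$(2) in the sufficiency part of Theorem~\ref{thm:4-vertex-char}. Your argument is a correct, self-contained proof of both directions.

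The key points all check out:
\begin{itemize}
\item The excesses $e(v)$ are even and sum to zero, so a vertex $u$ with $e(u)\ge 2$ exists whenever $\Phi(D)>0$.
\item Reversing a (simple) directed $u$--$w$ path changes only the endpoint excesses, each by exactly $2$. So $\Phi$ is a nonnegative integer that drops by $4$, and the process terminates.
\item In the stuck case, the reachability set $S$ has every edge of $[S,S^c]$ directed inward. This gives $\sum_{v\in S}\gamma(v)\le -d(S)-e(u)<-d(S)$, contradicting (2).
\item The degenerate case $S=V(G)$ is also covered, since then $d(S)=0$ and the derived strict inequality contradicts $\sum_v\gamma(v)=0$.
\end{itemize}

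The usual textbook route reduces the statement to max-flow/min-cut, or to Hoffman's circulation theorem, on an auxiliary network. Your argument is the Ford--Fulkerson augmenting-path idea carried out directly on the orientation, so it needs no flow machinery. It also shows that only the one-sided inequalities $\sum_{v\in S}\gamma(v)\ge -d(S)$ are needed. These are equivalent to (2) by complementation, because $d(S^c)=d(S)$ and $\sum_{v\in S^c}\gamma(v)=-\sum_{v\in S}\gamma(v)$.

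Adding your argument would make the note genuinely self-contained, in line with its stated aim, at the cost of about half a page. Citing keeps the note short but leaves the key tool unproved.
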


As suggested in~\cite{LSWW24}, we refer to subsets violating
the second condition of Theorem~\ref{thm:Hakimi} as \emph{bad} subsets,
as formally defined below.
Note that under the following definition, if a subset $S \subseteq V(G)$
is bad, then so is its complement $S^c$.

\begin{definition}
Let $G$ be a graph, and let $\gamma \colon V(G) \to \{0, \pm1, \dots, \pm(2\ell - 1)\}$
be a function satisfying $\gamma(v) \equiv d(v) \pmod{2}$ for each $v \in V(G)$
and $\sum_{v \in V(G)} \gamma(v) = 0$.
A subset $S \subseteq V(G)$ is called \emph{$\gamma$-bad} (or simply \emph{bad}) if
\[
\left|\sum_{v \in S} \gamma(v)\right| > d(S).
\]
\end{definition}
		
Next, we require a result concerning graphs containing $m$ edge-disjoint spanning trees. 

\begin{theorem}[\cite{H24}]\label{thm:m-tree-connected}
Assume that $G$ contains $m$ edge-disjoint spanning trees and fix $z \in V(G)$.
If $d_G(z) \le 2m$, then there exist at most $d_G(z) - m$ disjoint pairs of non-parallel edges incident to $z$ such that, after lifting all the corresponding $2$-paths with $z$ as the internal vertex, the subgraph induced by $V(G) \setminus \{z\}$ in the resulting graph also contains $m$ edge-disjoint spanning trees.
\end{theorem}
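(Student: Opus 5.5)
Theorem~\ref{thm:m-tree-connected} is cited as a black box from~\cite{H24}, so I would prove it directly by induction. Write $d = d_G(z)$ and $G_0 = G - z$.

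\textbf{Counting.} Let $T_1,\dots,T_m$ be edge-disjoint spanning trees of $G$. Each $T_i$ uses at least one edge at $z$, say $k_i \ge 1$ of them, with $\sum_i k_i \le d \le 2m$. Deleting $z$ splits $T_i$ into $k_i$ components. To reconnect $T_i$ inside $G_0$, I pair up its $z$-edges: lifting a pair $xz, zy$ with $x, y$ in different components of $T_i - z$ merges those two components. Lifting $k_i - 1$ suitably chosen pairs (taken, for example, along a spanning star on the components) turns $T_i - z$ into a spanning tree of $G_0$.

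However, the pairs must be \emph{disjoint}, meaning each edge at $z$ is used in at most one lifted pair. A pair uses two edges and a tree with $k_i$ edges at $z$ needs $k_i-1$ pairs, so the trees cannot simply each handle their own edges.

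\textbf{Reallocation.} Since $\sum_i k_i \le 2m$, the number of trees with $k_i \ge 3$ is bounded by the number of trees with $k_i = 1$. The plan is to reallocate $z$-edges among the trees via exchange arguments, reducing to the case where every $k_i \le 2$.

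\textbf{Case $k_i \le 2$.} A tree with $k_i = 2$ needs exactly one lifted pair, namely its two $z$-edges $xz, zy$. The endpoints $x, y$ lie in different components of $T_i - z$, and since $T_i$ has no parallel edges at $z$ joining the same component, the pair is non-parallel. A tree with $k_i = 1$ restricts directly to a spanning tree of $G_0$.

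The pairs used are then disjoint, and their number is $\#\{i : k_i = 2\} \le d - m$. Indeed, with $p$ trees having $k_i = 2$, we get $d \ge \sum_i k_i = 2p + (m - p) = m + p$.

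\textbf{Main obstacle.} The hard step is the reallocation. I would formulate it via matroid union: consider the $m$-fold union of the graphic matroid, and show that there is a base family with $k_i \le 2$ for all $i$. Concretely, this means showing that the $z$-edges of any tree with $k_i \ge 3$ can be traded through exchange chains to a tree with $k_j = 1$.

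This requires $d \le 2m$. The first thing I would try is a Nash-Williams partition-counting argument on $G$ restricted to the $z$-edges. If the reallocation fails, the fallback is induction on $d$: split off a single suitable non-parallel pair at $z$ while preserving $m$ edge-disjoint spanning trees (a Mader-type splitting lemma), which reduces $d$ by $2$ and the allowance $d - m$ by $1$.
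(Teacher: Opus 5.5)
The paper gives no proof of this statement; it quotes it from \cite{H24}. So your argument has to stand on its own.

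Your final step is correct. If the edge-disjoint spanning trees $T_1,\dots,T_m$ all satisfy $k_i\le 2$, lift the two $z$-edges $xz,zy$ of each tree with $k_i=2$. This gives disjoint pairs, and they are non-parallel because two $z$-edges of one tree have $x\neq y$. Then $T_i-z$ (for $k_i=1$) and $T_i-z+xy$ (for $k_i=2$) are edge-disjoint spanning trees of the lifted graph minus $z$. Your count $p\le d-m$ is also right.

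However, the whole difficulty sits in the reallocation claim: that the packing can be chosen with $k_i\le 2$ for every $i$. You do not prove it. You list possible tools (matroid-union exchange chains, a Nash-Williams counting argument, a Mader-type splitting lemma as fallback), but none is carried out. The splitting lemma you would fall back on is itself an unproved statement of essentially the same strength as the theorem. As written, your argument only covers the case where the initial packing already has all $k_i\le 2$, so this is a genuine gap.

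The gap closes with one elementary exchange; no matroid union is needed.

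\begin{itemize}
\item Suppose $k_i\ge 3$. Your own counting ($\sum_t(k_t-2)\le 0$ with all $k_t\ge 1$) gives $j\neq i$ with $k_j=1$. Let $zw$ be the unique $z$-edge of $T_j$.
\item Since $T_i-z$ has at least three components, pick a component $C$ with $w\notin C$. Let $e=zu$, with $u\in C$, be the unique edge of $T_i$ joining $z$ to $C$.
\item The $T_j$-path from $z$ to $u$ starts with $zw$ and then never returns to $z$. So it runs inside $G-z$ from $w\notin C$ to $u\in C$, and it contains an edge $f$ of $G-z$ with exactly one end in $C$.
\item $T_i-e+f$ is a spanning tree, because the components of $T_i-e$ are exactly $C$ and $V(G)\setminus C$.
\item $T_j-f+e$ is a spanning tree, because $f$ lies on the $T_j$-path between the ends of $e$.
\item The new $z$-degrees are $k_i-1$ and $2$, so $\sum_t\max\{k_t-2,0\}$ strictly decreases.
\end{itemize}

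Choosing $C$ with $w\notin C$ is what stops the exchange from merely swapping $e$ with $zw$. Iterating gives a packing with all $k_t\le 2$, and your last step then finishes the proof.
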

	
We are now prepared to prove the remaining case where $v(G) = 4$
and $e(G) = 3\ell - 3$.

\section{Characterization of 4-vertex strongly $\mathbb{Z}_{\ell}$-connected graphs}

\begin{theorem}\label{thm:4-vertex-char}
Let $\ell \ge 3$ be an integer and let $G$ be a graph with
$v(G) = 4$, $e(G) = 3\ell - 3$, $\mu(G) \le \ell - 2$, and
$\delta(G) \ge \ell - 1$.
Then $G \notin \mathrm{S}\mathbb{Z}_{\ell}$ if and only if both of the following hold:
\begin{enumerate}[label=(\arabic*), itemsep=0em]
\item $d(S) = 2\ell - 2$ for every subset $S \subseteq V(G)$ with $|S| = 2$;
\item $d(v) \equiv \ell \pmod{2}$ for all $v \in V(G)$.
\end{enumerate}
Moreover, in this case, $G$ fails to admit a $\beta$-orientation
only for the unique $\mathbb{Z}_{2\ell}$-boundary
$\beta = ( \ell, \ell, \ell, \ell )^T$.
\end{theorem}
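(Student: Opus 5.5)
We split the argument into the two directions plus the uniqueness claim, and work throughout with Hakimi's criterion (Theorem~\ref{thm:Hakimi}) applied to the corresponding $\gamma$-function of a $\mathbb{Z}_{2\ell}$-boundary $\beta$.

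\emph{Necessity of failure (the ``if'' direction).} Assume (1) and (2), and take $\beta=(\ell,\ell,\ell,\ell)^T$. This is a valid boundary: (2) gives $\beta(v)\equiv d(v)\pmod 2$, and $4\ell\equiv 0\pmod{2\ell}$. Suppose $D$ is a $\beta$-orientation and set $\gamma(v)=d^+_D(v)-d^-_D(v)$. Then $\gamma(v)\equiv \ell \pmod{2\ell}$, so $|\gamma(v)|\ge \ell$, and $\sum_v\gamma(v)=0$. The possible values of $\gamma(v)$ are odd multiples of $\ell$, namely $\pm\ell,\pm3\ell,\dots$. Because $|\gamma(v)|\le d(v)$ and $\sum_v d(v)=6\ell-6<8\ell$, some cancellation argument is needed.

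Since the sum is zero, the vertices split so that some pair $S$ has $\gamma$-sum equal to $\pm 2\ell$ or larger in absolute value; the only exception is the case where both pairs sum to $0$. If every value is $\pm\ell$, the two positive vertices form a pair $S$ with $|\sum_S\gamma|=2\ell>2\ell-2=d(S)$ by (1). This violates Hakimi's condition, which is necessary. If some $|\gamma(v)|\ge 3\ell$, then $d(v)\ge 3\ell$. However, $\mu\le\ell-2$ gives $d(v)\le 3\ell-6$, a contradiction. Hence $G\notin S\mathbb{Z}_\ell$.

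\emph{Sufficiency of failure and uniqueness (the ``only if'' direction).} Take any boundary $\beta$ with its $\gamma$-function, sorted as $\gamma_1\ge\gamma_2\ge\gamma_3\ge\gamma_4$ with range at most $2\ell$, and suppose some set is bad. Since complements of bad sets are bad, we only need to examine singletons and pairs.

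A singleton $v$ is bad only if $|\gamma(v)|>d(v)\ge\ell-1$. We first try to repair this by replacing $\gamma$ with $\gamma\pm 2\ell$ at two vertices, one shifted up and one shifted down, which keeps the sum zero. The key counting facts are:
\[
d(S)+d(S^c\text{-pair})\ge\dots, \qquad \sum_{|S|=2}d(S)=2\cdot(2e(G))-\ldots,
\]
more precisely, each edge lies in exactly $4$ of the $6$ pairs' cuts, so summing $d(S)$ over the $3$ complementary pair-partitions gives $\sum d(S) = 2e(G)=6\ell-6$. It follows that either some partition has $d(S)<2\ell-2$ while another has $d(S)>2\ell-2$, or all three equal $2\ell-2$, which is condition (1).

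Next we show that a bad pair with $|\sum_S\gamma|\ge 2\ell$ can be fixed by a $\pm2\ell$ shift inside $S$ together with a compensating shift in $S^c$. This uses the bound $|\gamma|\le 2\ell-1$ and the range bound. If instead $|\sum_S\gamma|\le 2\ell-2$, the pair is not bad whenever $d(S)\ge 2\ell-2$. So the obstruction forces $d(S)\le 2\ell-3$ for some pair. We would handle that case with Theorem~\ref{thm:m-tree-connected}: lift at a vertex $z$ of degree at most $2(\ell-1)$, keeping $\ell-1$ edge-disjoint spanning trees on the remaining triangle. Then Theorem~\ref{thm:small-SZl-other}\ref{thm:SZl-small-3vertices} and Proposition~\ref{prop:SZ5-property}\ref{thm:SZl-lifting} can be used, with $z$ handled directly by choosing the orientations of its non-lifted edges.

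The remaining configuration is all pairs having $d(S)=2\ell-2$ and every pair-sum forced to be $\pm2\ell$ for every admissible $\gamma$. This forces all $\gamma(v)\equiv\ell\pmod{2\ell}$, so $\beta\equiv(\ell,\ell,\ell,\ell)$. The parity requirement $\beta(v)\equiv d(v)$ then yields (2), which gives the uniqueness statement as well.

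\textbf{Main obstacle.} The delicate step is the case analysis showing that every non-constant $\beta$ admits some representative $\gamma$ with no bad pair. This requires tracking how the $\pm2\ell$ shifts interact with the single-vertex constraints $|\gamma(v)|\le d(v)$, especially when some $d(v)$ is as small as $\ell-1$. For that step we would first try the lifting reduction via Theorem~\ref{thm:m-tree-connected} at a minimum-degree vertex, and fall back on explicit case checks only when that reduction does not apply.
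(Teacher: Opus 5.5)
You have the ``if'' direction right, and it is the paper's argument. Under (1) and (2) the boundary $\beta\equiv\ell$ is admissible. Any $\beta$-orientation has $\gamma(v)=\pm\ell$, because $d(v)\le 3\ell-6<3\ell$. The two vertices with $\gamma=+\ell$ then violate Hakimi's condition across a cut of size $2\ell-2$.

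\textbf{The gap.} The ``only if'' direction, together with uniqueness, is the substance of the theorem, and it is not proved. You leave its main case analysis open, and the steps you do state are false or do not work.

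\emph{The pair fix.} You claim a bad pair with $|\sum_S\gamma|\ge 2\ell$ can be fixed by a $\pm2\ell$ shift in $S$ plus a compensating shift in $S^c$. This cannot hold without a hypothesis on the cuts: for the extremal graphs with $\beta\equiv\ell$, no representative of $\beta$ whatsoever is free of bad sets.

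\emph{The lifting step.} Your plan to treat a bad pair with $d(S)\le 2\ell-3$ by lifting at a vertex $z$ via Theorem~\ref{thm:m-tree-connected} also fails. The requirement $e(G')\ge 2\ell-2$ forces exactly $d(z)-(\ell-1)$ lifts, so only $2\ell-2-d(z)$ edges remain at $z$. These cannot realize $\beta(z)$ when the representative $g$ of $\beta(z)$ of least absolute value has $|g|\ge 2\ell-d(z)$, which can happen as soon as $d(z)\ge\ell$. For example, take $\ell=5$ with $\mu(a,b)=\mu(c,d)=3$, $\mu(a,c)=\mu(b,d)=2$, $\mu(a,d)=\mu(b,c)=1$, and $\gamma=(4,4,-4,-4)$.
\begin{itemize}
\item The pair $\{a,b\}$ is bad with $d(\{a,b\})=6$, and every vertex has degree $6$.
\item Lifting at any vertex leaves two edges there, which cannot give $\pm4\pmod{10}$.
\item Yet a $\beta$-orientation exists, via $\gamma'=(-6,4,6,-4)$.
\end{itemize}
Lifting works only when $\{z\}$ itself is bad: then $|\gamma(z)|\ge d(z)+2$, and the other representative $\gamma(z)\mp2\ell$ has absolute value at most $2\ell-2-d(z)$. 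This is how the paper disposes of bad singletons; your two-vertex ``repair'' of singletons is never carried out.

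\emph{The final step.} The claim that every pair-sum is forced to be $\pm2\ell$ is wrong, since the cross pairs of $(\ell,\ell,-\ell,-\ell)$ sum to $0$. Also, $\beta\equiv\ell$ is asserted rather than derived.

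\textbf{What is missing.}
\begin{enumerate}
\item \emph{Normalization.} Replacing $\beta$ by $-\beta$ if necessary, assume $\gamma(v_1)\ge-\gamma(v_4)$ for the sorted values. With $\gamma(v_1)-\gamma(v_4)\le 2\ell$ this gives $-\ell\le\gamma(v_4)\le 0$.
\item \emph{Cross pairs are never bad.} Every pair $\{v_i,v_j\}$ with $i\in\{1,2\}$, $j\in\{3,4\}$ then has $|\gamma(v_i)+\gamma(v_j)|\le\ell<\ell+1\le d(\{v_i,v_j\})$; the last bound comes from $\mu\le\ell-2$. So, singletons aside, the only possible bad set is $\{v_1,v_2\}$. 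It satisfies $d(\{v_1,v_2\})<\gamma(v_1)+\gamma(v_2)\le 2\ell$, hence $d(\{v_1,v_2\})\le 2\ell-2$ by parity.
\item \emph{Choosing the shift.} The counting identity you noted, $d(\{v_1,v_2\})+d(\{v_1,v_3\})+d(\{v_1,v_4\})=6\ell-6$, yields $x\in\{3,4\}$ with $d(\{v_1,v_x\})\ge 2\ell-1$ unless all three cuts equal $2\ell-2$. With $\{x,y\}=\{3,4\}$, the specific shift $\gamma(v_1)\mapsto\gamma(v_1)-2\ell$, $\gamma(v_y)\mapsto\gamma(v_y)+2\ell$ kills every bad pair:
\begin{itemize}
\item on $\{v_1,v_x\}$ the sum $\sigma$ satisfies $0\le\sigma\le\ell$, and parity gives $2\ell-\sigma\le d(\{v_1,v_x\})$;
\item on $\{v_1,v_2\}$ the bound $d\ge\ell+1$ gives $2\ell-\sigma\le d$;
\item $\{v_1,v_y\}$ is unchanged.
\end{itemize}
Any new singleton violations are again excluded by the lifting argument.
\item \emph{The extremal case.} If all cuts equal $2\ell-2$, badness forces $\gamma(v_1)+\gamma(v_2)=2\ell$. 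Together with $\gamma(v_3)\ge\gamma(v_4)\ge-\ell$ and $\gamma(v_1)\le\gamma(v_4)+2\ell$, this pins $\gamma=(\ell,\ell,-\ell,-\ell)$. That yields $\beta\equiv\ell$, condition (2), and uniqueness at once.
\end{enumerate}
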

		
\begin{proof}
We first prove the necessary conditions.  
Suppose that $G$ is a graph as in the statement of the theorem, and that $G$ does not belong to $\SZ_\ell$.  
Let $\beta$ be a $\Z_{2\ell}$-boundary of $G$ for which $G$ has no $\beta$-orientation, and let $\gamma$ be any corresponding $\gamma$-function of $\beta$.  

Without loss of generality, label the vertices of $G$ as $v_1, v_2, v_3, v_4$ so that 
$$ \max_{v \in V(G)} \gamma(v)=\gamma(v_1) \ge \gamma(v_2) \ge \gamma(v_3) \ge \gamma(v_4) = \min_{v \in V(G)} \gamma(v).$$  
By the definition of a $\gamma$-function, we have $\gamma(v_1)-\gamma(v_4) \le 2\ell$.  
Moreover, by symmetry we may assume that $\gamma(v_1) \ge -\gamma(v_4)$, which immediately implies $-\ell \le \gamma(v_4) \le 0$.

In the following claims, we always assume that $\beta$ is this particular boundary and that the vertices are labeled as above.  
We now establish several claims regarding the structure of $G$ and the values of $\gamma$.

\begin{claim}\label{claim:edge-disjoint-trees}
$G$ contains $(\ell-1)$ edge-disjoint spanning trees. 
\end{claim}

\begin{proof}[Proof of Claim~\ref{claim:edge-disjoint-trees}]
Let $\mathcal{P}$ be an arbitrary partition of $V(G)$.

\begin{itemize}
\item If $|\mathcal{P}| = 4$, then $e(G/\mathcal{P}) = 3(\ell-1)$.
\item If $|\mathcal{P}| = 3$, then
\(
e(G/\mathcal{P}) \ge 3\ell - 3 - (\ell - 2) = 2\ell - 1 > 2(\ell-1).
\)
\item If $|\mathcal{P}| = 2$, then
\(
e(G/\mathcal{P}) \ge \min\{ 3\ell - 3 - 2(\ell - 2), \delta(G) \} \ge \ell - 1.
\)
\end{itemize}

By the Nash-Williams--Tutte Theorem, it follows that $G$ contains $(\ell-1)$ edge-disjoint spanning trees.
\renewcommand{\qedsymbol}{$\blacksquare$}
\end{proof}

\begin{claim}\label{claim:cut-size-2}
For any subset $S \subseteq V(G)$ with $|S| = 2$, we have $d(S) \ge \ell + 1$.
\end{claim}

\begin{proof}[Proof of Claim~\ref{claim:cut-size-2}]
By direct calculation, we have
\(
d(S) \ge 3\ell - 3 - 2(\ell - 2) = \ell + 1.
\)
\renewcommand{\qedsymbol}{$\blacksquare$}
\end{proof}
			
\begin{claim}\label{claim:no-single-vertex-bad}
No single vertex can form a $\gamma$-bad set. That is, for all $i \in [4]$, we have
\[
|\gamma(v_i)| \le d(v_i).
\]
\end{claim}

\begin{proof}[Proof of Claim~\ref{claim:no-single-vertex-bad}]
Suppose, for contradiction, that $|\gamma(v_j)| > d(v_j)$ for some $j \in [4]$.  
Since $d(v_j)$ and $\gamma(v_j)$ have the same parity, it follows that
\[
d(v_j) \le |\gamma(v_j)| - 2 \le 2\ell - 3 < 2(\ell - 1).
\]

Since $G$ contains $(\ell-1)$ edge-disjoint spanning trees and $d(v_j) < 2(\ell - 1)$, 
Theorem~\ref{thm:m-tree-connected} guarantees the existence of an integer $k \le d(v_j) - (\ell - 1)$
and $k$ disjoint pairs of non-parallel edges incident to $v_j$, such that after lifting the corresponding
$2$-paths with $v_j$ as the internal vertex, the subgraph induced by $V(G) \setminus \{v_j\}$ also contains $(\ell-1)$ edge-disjoint spanning trees.

Let $G_1$ denote the graph obtained from $G$ by performing these lifts, 
and let $G' \subseteq G_1$ be the subgraph induced by $V(G_1) \setminus \{v_j\}$. Thus, $G'$ contains $(\ell-1)$ edge-disjoint spanning trees. Hence, by Theorem~\ref{thm:small-SZl-other}\ref{thm:SZl-small-3vertices}, $G' \in S\mathbb{Z}_\ell$.

After lifts, the number of edges in $G'$ is
\[
e(G') = e(G) - d_G(v_j) + k = 3\ell - 3 - d_G(v_j) + k \ge 2\ell - 2,
\]
which forces $k = d_G(v_j) - (\ell - 1)$. Hence, the degree of $v_j$ in $G_1$ becomes
\[
d_{G_1}(v_j) = d_G(v_j) - 2k = 2(\ell - 1) - d_G(v_j).
\]

Observe that
\[
d_{G_1}(v_j) - (2\ell - |\gamma(v_j)|) = |\gamma(v_j)| - 2- d_G(v_j) \ge 0,
\]
so that
\[
d_{G_1}(v_j) \ge 2\ell - |\gamma(v_j)|, \quad \text{and} \quad
d_{G_1}(v_j) - (2\ell - |\gamma(v_j)|) \equiv 0 \pmod{2}.
\]

Therefore, in $G_1$, we can orient $2\ell - |\gamma(v_j)|$ edges into $v_j$ if $\gamma(v_j) \ge 0$, 
or away from $v_j$ if $\gamma(v_j) < 0$, and orient the remaining edges one-in-one-out.  
This ensures that the partial orientation at $v_j$ satisfies the boundary condition $\beta(v_j)$:
\[
d^+(v_j) - d^-(v_j) \equiv \gamma(v_j) \equiv \beta(v_j) \pmod{2\ell}.
\]

The orientation of the edges incident to $v_j$, together with the $\Z_{2\ell}$-boundary $\beta$ of $G$, naturally induces a $\Z_{2\ell}$-boundary $\beta'$ on $G'$, 
so that any $\beta'$-orientation of $G'$ extends to a $\beta$-orientation of $G_1$, and hence of $G$.  
Since $G' \in \mathrm{S}\mathbb{Z}_{\ell}$, it admits a $\beta'$-orientation, contradicting the assumption that $G$ has no $\beta$-orientation.

This completes the proof of the claim.
\renewcommand{\qedsymbol}{$\blacksquare$}
\end{proof}

\begin{claim}\label{claim:gamma-structure}
We have
\(
\gamma(v_3) < 0,
\)
and for any pair of vertices $v_i, v_j \in V(G)$,
\[
|\gamma(v_i) + \gamma(v_j)| \le 2\ell.
\]
\end{claim}

\begin{proof}[Proof of Claim~\ref{claim:gamma-structure}]
By \Cref{claim:no-single-vertex-bad}, any $\gamma$-bad set must consist of exactly two vertices.  

Suppose, for contradiction, that $\gamma(v_3) \ge 0$. Then we must have
\[
\gamma(v_1) \le |\gamma(v_4)| \le \ell.
\] 
By \Cref{claim:cut-size-2}, for any two-vertex subset $S = \{v_i, v_j\}$,
\[
|\gamma(v_i) + \gamma(v_j)| \le |\gamma(v_4)| \le \ell < \ell + 1 \le d(S),
\]
and so no two-vertex set can be $\gamma$-bad, a contradiction. Hence, 
\[
\gamma(v_3) < 0.
\]

Finally, since $- \ell \le \gamma(v_4) \le 0$ (as established at the start of the whole proof), it follows that for any pair of vertices $v_i, v_j \in V(G)$,
\[
|\gamma(v_i) + \gamma(v_j)| \le |\gamma(v_3) + \gamma(v_4)| \le 2\ell,
\]
as claimed.
\renewcommand{\qedsymbol}{$\blacksquare$}
\end{proof}

\begin{claim}\label{claim:gamma-pair-cut-bound}
For any two vertices $\{v_i, v_j\} \subseteq V(G)$, we have
\[
|\gamma(v_i) + \gamma(v_j)| \le d(\{v_i, v_j\})
\quad \text{or} \quad
2\ell - |\gamma(v_i) + \gamma(v_j)| \le d(\{v_i, v_j\}).
\]
Moreover, if $d(\{v_i, v_j\}) \ge 2\ell - 1$, then both inequalities hold.
\end{claim}

\begin{proof}[Proof of Claim~\ref{claim:gamma-pair-cut-bound}]
First, suppose $d(\{v_i, v_j\}) < 2\ell - 1$.  
If neither inequality holds, then
\[
|\gamma(v_i) + \gamma(v_j)| > d(\{v_i, v_j\}) \quad \text{and} \quad
2\ell - |\gamma(v_i) + \gamma(v_j)| > d(\{v_i, v_j\}).
\]  
Summing these two inequalities gives
\[
2\ell > 2d(\{v_i, v_j\}) \implies d(\{v_i, v_j\}) < \ell,
\]
which contradicts \Cref{claim:cut-size-2}.  
Hence at least one of the two inequalities holds.

Next, suppose $d(\{v_i, v_j\}) \ge 2\ell - 1$.  
Observe that $|\gamma(v_i) + \gamma(v_j)|$, $2\ell - |\gamma(v_i) + \gamma(v_j)|$, and $d(\{v_i, v_j\})$ all have the same parity.  
\begin{itemize}
\item If $d(\{v_i, v_j\})$ is even, then $d(\{v_i, v_j\}) \ge 2\ell$ and hence
\[
\max\bigl\{ |\gamma(v_i) + \gamma(v_j)|, \, 2\ell - |\gamma(v_i) + \gamma(v_j)|\bigr\} \le 2\ell \le d(\{v_i, v_j\}).
\]
\item If $d(\{v_i, v_j\})$ is odd, then $d(\{v_i, v_j\}) \ge 2\ell - 1$, and by parity, the maximum of the two expressions is at most $2\ell - 1 \le d(\{v_i, v_j\})$.
\end{itemize}
Thus, both inequalities hold in this case.
\renewcommand{\qedsymbol}{$\blacksquare$}
\end{proof}

\begin{claim}\label{claim:unique-gamma-bad-set}
The only $\gamma$-bad set is $\{v_1, v_2\}$ (equivalently, $\{v_3, v_4\}$).
\end{claim}

\begin{proof}[Proof of Claim~\ref{claim:unique-gamma-bad-set}]
We consider two cases based on the sign of $\gamma(v_2)$.

\noindent\textbf{Case 1:} $\gamma(v_2) \le 0$.  
Then we have 
\[
\gamma(v_1) > 0 \ge \gamma(v_2) \ge \gamma(v_3) \ge \gamma(v_4).
\] 
Since $\sum_{i=1}^4 \gamma(v_i) = 0$ and $\gamma(v_1) - \gamma(v_4) \le 2\ell$, it follows that
\[
0 \le \gamma(v_1) + \gamma(v_4) \le \gamma(v_1) + \gamma(v_3) \le \gamma(v_1) +\frac{1}{2}(\gamma(v_2) + \gamma(v_3)) = \frac{1}{2}(\gamma(v_1) - \gamma(v_4)) \le \ell.
\] 
Thus, the sets $\{v_1, v_3\}$ and $\{v_1, v_4\}$ cannot be $\gamma$-bad. Therefore, the only possible $\gamma$-bad set is $\{v_1, v_2\}$.

\noindent\textbf{Case 2:} $\gamma(v_2) > 0$.  
Then 
\[
\gamma(v_1) \ge \gamma(v_2) > 0 \ge \gamma(v_3) \ge \gamma(v_4).
\] 
Consider any set $S = \{v_i, v_j\}$ with $\gamma(v_i) > 0$ and $\gamma(v_j) < 0$, i.e., $i \in \{1,2\}$ and $j \in \{3,4\}$.  
By the bounds established at the start of the proof, we have
\[
-l \le \gamma(v_4) \le \min\Bigl\{ \sum_{v \in S} \gamma(v), \sum_{v \in S^c} \gamma(v) \Bigr\} \le 0.
\] 
This implies $|\gamma(v_i) + \gamma(v_j)| \le \ell$. By \Cref{claim:cut-size-2}, such a set $S$ cannot be $\gamma$-bad. Hence, the only possible $\gamma$-bad set is again $\{v_1, v_2\}$.
\renewcommand{\qedsymbol}{$\blacksquare$}
\end{proof}

\begin{claim}\label{claim:cut-size-2-exact}
For every 2-vertex subset $S \subseteq V(G)$, we have
\[
d(S) = 2\ell - 2.
\]
\end{claim}

\begin{proof}[Proof of Claim~\ref{claim:cut-size-2-exact}]
Suppose not. By \Cref{claim:unique-gamma-bad-set}, the only $\gamma$-bad set is $\{v_1, v_2\}$, so we must have
\[
\gamma(v_1) + \gamma(v_2) > d(\{v_1, v_2\}).
\]
Moreover, by \Cref{claim:gamma-structure}, $d(\{v_1, v_2\}) \le 2\ell - 2$.  
Since 
\[
d(\{v_1, v_2\}) + d(\{v_1, v_3\}) + d(\{v_1, v_4\}) = 2 e(G) = 6\ell - 6,
\] 
it follows that
\[
d(\{v_1, v_3\}) + d(\{v_1, v_4\}) = 4\ell - 4 + \bigl(2\ell - 2 - d(\{v_1, v_2\})\bigr) \ge 4\ell - 4.
\]

Hence, if some 2-vertex subset $S \subseteq V(G)$ satisfies $d(S) \neq 2\ell - 2$, then either
\[
d(\{v_1, v_3\}) \ge 2\ell - 1 \quad \text{or} \quad d(\{v_1, v_4\}) \ge 2\ell - 1.
\]

Without loss of generality, suppose $d(\{v_1, v_x\}) \ge 2\ell - 1$ for some $x \in \{3,4\}$, and let $\{y\} = \{3,4\} \setminus \{x\}$.  
Define a new function $\gamma'$ on $V(G)$ by
\[
\gamma'(v_1) = \gamma(v_1) - 2\ell, \quad
\gamma'(v_y) = \gamma(v_y) + 2\ell, \quad
\gamma'(v_i) = \gamma(v_i) \text{ for } i \in \{2,x\}.
\]
Clearly, $\gamma'(v) \equiv \gamma(v) \equiv \beta(v) \pmod{2\ell}$ for all $v \in V(G)$.  
We now verify that $G$ contains no $\gamma'$-bad set.

\begin{itemize}
\item \textbf{For the set $\bm{\{v_1, v_2\}}$:}  
Since $\gamma(v_1) + \gamma(v_2) > d(\{v_1, v_2\}) \ge 0$, \Cref{claim:gamma-pair-cut-bound} gives
\[
|\gamma'(v_1) + \gamma'(v_2)| = |\gamma(v_1) + \gamma(v_2) - 2\ell| \le d(\{v_1, v_2\}),
\] 
so $\{v_1, v_2\}$ is not $\gamma'$-bad.

\item \textbf{For the set $\bm{\{v_1, v_x\}}$:}  
If $x = 4$, then $\gamma(v_1) \ge -\gamma(v_4)$ by assumption.  
If $x = 3$, then $\gamma(v_1) + \gamma(v_3) < 0$ would imply $\gamma(v_2) + \gamma(v_4) \le \gamma(v_1) + \gamma(v_3) < 0$, contradicting $\sum_{i=1}^4 \gamma(v_i) = 0$.  
In either case, $\gamma(v_1) + \gamma(v_x) \ge 0$.  
Since $d(\{v_1, v_x\}) \ge 2\ell - 1$, \Cref{claim:gamma-pair-cut-bound} gives
\[
|\gamma'(v_1) + \gamma'(v_x)| = |\gamma(v_1) + \gamma(v_x) - 2\ell| \le d(\{v_1, v_x\}),
\] 
so $\{v_1, v_x\}$ is not $\gamma'$-bad.

\item \textbf{For the set $\bm{\{v_1, v_y\}}$:}  
We have
\[
|\gamma'(v_1) + \gamma'(v_y)| = |\gamma(v_1) + \gamma(v_y)| \le d(\{v_1, v_y\}),
\] 
so $\{v_1, v_y\}$ is also not $\gamma'$-bad.
\end{itemize}

Thus, $G$ contains no $\gamma'$-bad set. By Hakimi's theorem (Theorem~\ref{thm:Hakimi}), $G$ admits a $\gamma'$-orientation, and hence a $\beta$-orientation. This contradicts our assumption that $G$ admits no $\beta$-orientation.  

Therefore, every 2-vertex subset satisfies $d(S) = 2\ell - 2$, as claimed.
\renewcommand{\qedsymbol}{$\blacksquare$}
\end{proof}

\begin{claim}\label{claim:gamma-values-determined}
The $\gamma$-function satisfies 
\[
\gamma(v_1) = \gamma(v_2) = \ell \quad \text{and} \quad \gamma(v_3) = \gamma(v_4) = -\ell.
\]  
Consequently, for all $v \in V(G)$, we have
\[
\beta(v) \equiv \gamma(v) \equiv \ell \pmod{2\ell} \quad \text{and} \quad d(v) \equiv \ell \pmod{2}.
\]
\end{claim}

\begin{proof}[Proof of Claim~\ref{claim:gamma-values-determined}]
Since $\{v_1,v_2\}$ is a $\gamma$-bad set, we have
\[
\gamma(v_1) + \gamma(v_2) \ge d(\{v_1,v_2\}) + 2 = 2\ell.
\]  
On the other hand, by \Cref{claim:gamma-structure}, for any pair of vertices $v_i, v_j$ we have 
\(|\gamma(v_i) + \gamma(v_j)| \le 2\ell\).  
It follows that equality must hold, so
\[
\gamma(v_1) + \gamma(v_2) = 2\ell \quad \text{and} \quad \gamma(v_3) + \gamma(v_4) = -2\ell.
\]

Since $\gamma(v_3) \ge \gamma(v_4) \ge -\ell$, we deduce that $\gamma(v_3) = \gamma(v_4) = -\ell$.  
Moreover, the inequality $\gamma(v_1) - \gamma(v_4) \le 2\ell$ implies $\gamma(v_1) \le \ell$, and together with $\gamma(v_1) + \gamma(v_2) = 2\ell$ we conclude $\gamma(v_1) = \gamma(v_2) = \ell$.

Finally, since $\gamma(v) \equiv \beta(v) \equiv \ell \pmod{2\ell}$ and $d(v) \equiv \gamma(v) \pmod{2}$ for all $v \in V(G)$, we obtain the desired congruences $\beta(v) \equiv \ell \pmod{2\ell}$  and $ d(v) \equiv \ell \pmod{2}$.
\renewcommand{\qedsymbol}{$\blacksquare$}
\end{proof}

The content of Claims~\ref{claim:cut-size-2-exact} and~\ref{claim:gamma-values-determined} exactly matches the necessary conditions in the theorem, so the necessity is established.

We now prove the sufficiency. Suppose $G$ and a $\Z_{2\ell}$-boundary $\beta$ satisfy the conditions in the theorem. Assume, for contradiction, that $G$ admits a $\beta$-orientation $D$, and define $\gamma(v) := d_D^+(v) - d_D^-(v)$ for all $v \in V(G)$. By definition, $\gamma(v) \equiv \beta(v) \equiv \ell \pmod{2\ell}$, so $\gamma(v) \in \{\pm \ell, \pm 3\ell, \dots\}$. 

Hakimi's theorem (Theorem~\ref{thm:Hakimi}) requires $|\sum_{v \in S} \gamma(v)| \le d(S)$ for every subset $S \subseteq V(G)$. Applying this to singletons $S = \{v\}$, together with $d(v) \le e(G)= 3\ell - 3$, forces $\gamma(v) \in \{\pm \ell\}$ for all $v$. Ordering the vertices so that $\gamma(v_1) \ge \gamma(v_2) \ge \gamma(v_3) \ge \gamma(v_4)$, we recover $\gamma(v_1) = \gamma(v_2) = \ell$ and $\gamma(v_3) = \gamma(v_4) = -\ell$.

But then for $S = \{v_1, v_2\}$, we have 
$$
|\gamma(v_1) + \gamma(v_2)| = 2\ell > d(\{v_1, v_2\}) = 2\ell - 2,
$$
contradicting Hakimi's condition. So $G$ admits no $\beta$-orientation, finishing the proof.
\end{proof}

Now we present a complete characterization of graphs with four vertices that belong to $\SZ_\ell$.  
First, we introduce a simple reduction. Given a graph $G_0$, the \emph{$\SZ_\ell$-simplified graph} $G$ of $G_0$ is obtained by replacing any multiedge $[x,y]$ with multiplicity $\mu_0 \ge \ell-1$ by an $(\ell-1)$-multiedge.  
Observe that $G_0 \in S\mathbb{Z}_\ell$ if and only if $G_0/[x,y] \in S\mathbb{Z}_\ell$, which holds if and only if the $\SZ_\ell$-simplified graph $G$ belongs to $S\mathbb{Z}_\ell$.  

Based on this, we have the following characterization.

\begin{theorem}\label{thm:4-ver-character}
Let $G_0$ be a graph with $v(G_0) = 4$, and let $G$ be a $\SZ_\ell$-simplified graph of $G_0$.  
Then $G_0 \in S\mathbb{Z}_\ell$ if and only if $\delta(G) \ge \ell-1$, $e(G) \ge 3\ell-3$, and $G$ satisfies at least one of the following:
\begin{enumerate}[label=(\arabic*)]
    \item $\mu(G) = \ell-1$ or $e(G)\ge 3l-2$;
    \item there exists a 2-vertex set $S \subseteq V(G)$ with $d(S) \ne 2\ell-2$;
    \item there exists a vertex $v \in V(G)$ with $d(v) \not\equiv \ell \pmod{2}$.
\end{enumerate}
\end{theorem}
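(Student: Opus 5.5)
The plan is to reduce everything to the simplified graph $G$ and then split into cases according to $\mu(G)$ and $e(G)$, using \Cref{thm:4-vertex-char} for the critical case. By the observation preceding the statement, $G_0\in\SZ_\ell$ if and only if $G\in\SZ_\ell$, so it suffices to characterize $G$.

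Necessity of $\delta(G)\ge \ell-1$ and $e(G)\ge 3\ell-3$ follows from Proposition~\ref{prop:SZ5-property}\ref{thm:l-1spanning} applied to $G$.

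\textbf{Case $\mu(G)=\ell-1$.} Let $[x,y]$ be an $(\ell-1)$-multiedge. By Theorem~\ref{thm:small-SZl-other}\ref{thm:SZl-small-2vertices}, $(\ell-1)K_2\in\SZ_\ell$, so Proposition~\ref{prop:SZ5-property}\ref{thm:SZl-contraction} and~\ref{thm:SZl-contraction-reserve} give $G\in\SZ_\ell$ if and only if $G/[x,y]\in\SZ_\ell$. The contraction $G/[x,y]$ is a $3$-vertex graph $T_{a,b,c}$, which lies in $\SZ_\ell$ exactly when it has $\ell-1$ edge-disjoint spanning trees. I would verify these trees directly from $\delta(G)\ge\ell-1$ and $e(G)\ge 3\ell-3$:
\begin{itemize}
\item the contracted vertex has degree $d(\{x,y\})=d(x)+d(y)-2(\ell-1)$;
\item the other two vertices $u,w$ keep their degrees, which are at least $\ell-1$;
\item the edge count is $e(G)-(\ell-1)\ge 2\ell-2$.
\end{itemize}
The remaining requirement is $d(\{x,y\})\ge \ell-1$. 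This is the main obstacle, since $d(x)$ and $d(y)$ may each equal $\ell-1$. In that subcase $x$ and $y$ have no other neighbours, so $\{x,y\}$ is disconnected from $\{u,w\}$. Then $G$ is disconnected, which contradicts the spanning-tree condition; alternatively one uses $d(\{x,y\})=d(\{u,w\})\ge \ldots$ with $d(u),d(w)\ge\ell-1$. The clean route is to note that the spanning-tree condition is already necessary for $G$. So in this case I would restate condition (1) as covering only graphs that have $\ell-1$ edge-disjoint spanning trees, and check that $\delta\ge\ell-1$ with $e\ge 3\ell-3$ does force $d(\{x,y\})\ge\ell-1$. If it does not, I would add the cut condition explicitly, as the paper's intended reading presumably assumes.

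\textbf{Case $\mu(G)\le \ell-2$ and $e(G)\ge 3\ell-2$.} Theorem~\ref{thm:small-SZl-other}\ref{thm:SZl-small-4vertices} gives $G\in\SZ_\ell$ directly.

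\textbf{Case $\mu(G)\le\ell-2$ and $e(G)=3\ell-3$.} \Cref{thm:4-vertex-char} states that $G\notin\SZ_\ell$ exactly when every $2$-set has degree $2\ell-2$ and every degree is congruent to $\ell$ modulo $2$. Hence $G\in\SZ_\ell$ exactly when (2) or (3) holds.

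Combining the three cases proves both directions. Sufficiency follows because each of (1), (2), (3) lands in a case where membership is established. Necessity follows because if none of (1)–(3) holds, we are in the last case with both conditions of \Cref{thm:4-vertex-char} satisfied, so $G\notin\SZ_\ell$.
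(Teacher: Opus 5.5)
\textbf{Gap in the case $\mu(G)=\ell-1$.} Your overall structure is the paper's, and your second and third cases and the final bookkeeping are correct. The three cases are: $\mu(G)=\ell-1$ (contract the $(\ell-1)$-multiedge); $\mu(G)\le\ell-2$ with $e(G)\ge 3\ell-2$; and $\mu(G)\le\ell-2$ with $e(G)=3\ell-3$, where Theorem~\ref{thm:4-vertex-char} applies. However, in the first case you never prove the one inequality you need, namely that the contracted vertex of $G/[x,y]$ has degree $d(\{x,y\})\ge \ell-1$.

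Your fallbacks do not close this. In the sufficiency direction the only hypotheses are $\delta(G)\ge\ell-1$, $e(G)\ge 3\ell-3$ and $\mu(G)=\ell-1$. So you may not invoke the existence of $\ell-1$ edge-disjoint spanning trees: that this is necessary for membership in $\mathrm{S}\mathbb{Z}_\ell$ is beside the point, and using it here would be circular. There is also no need to restate or amend condition (1). Your subcase $d(x)=d(y)=\ell-1$ does not address the general requirement $d(x)+d(y)\ge 3\ell-3$. Nor can degree information alone supply it: two disjoint copies of $(\ell-1)K_2$ have minimum degree $\ell-1$ and multiplicity $\ell-1$, yet the cut is empty. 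The edge count has to enter.

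\textbf{The missing step.} Let $u,w$ be the other two vertices. Because $G$ is $\mathrm{S}\mathbb{Z}_\ell$-simplified, $\mu_G(u,w)\le \ell-1$. The edges of $G$ split into those inside $\{x,y\}$, those inside $\{u,w\}$, and those crossing the cut, so
\[
d(\{x,y\}) \;=\; e(G)-\mu_G(x,y)-\mu_G(u,w)\;\ge\;(3\ell-3)-(\ell-1)-(\ell-1)\;=\;\ell-1 .
\]
This is exactly the paper's estimate $\delta(G')\ge\min\{\delta(G),\,3\ell-3-2\mu(G)\}\ge \ell-1$. With it, $G/[x,y]$ satisfies the hypotheses of Theorem~\ref{thm:small-SZl-other}\ref{thm:SZl-small-3vertices}, and the rest of your argument goes through unchanged.
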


\begin{proof}
Since $G$ is $\SZ_\ell$-simplified, we have $\mu(G) \le \ell-1$. Clearly, the conditions $e(G) \ge 3\ell-3$ and $\delta(G) \ge \ell-1$ are necessary for $G \in S\mathbb{Z}_\ell$.  

We consider cases:

\begin{itemize}
    \item If $\mu(G) = \ell-1$, let $x,y \in V(G)$ be such that $\mu_G([x,y]) = \ell-1$. Then $G[x,y]$ is $\SZ_\ell$, and consider $G' = G/[x,y]$. We have
    \[
    e(G') = e(G) - (\ell-1) \ge 2\ell-2, \quad \delta(G') \ge \min\{\delta(G), 3\ell-3 - 2\mu(G)\} \ge \ell-1.
    \]
    By Theorem \ref{thm:small-SZl-other}\ref{thm:SZl-small-3vertices}, $G'$ is $\SZ_\ell$, and hence by Proposition \ref{prop:SZ5-property}\ref{thm:SZl-contraction-reserve}, $G$ is also $\SZ_\ell$.
    
    \item If $\mu(G) \le \ell-2$ and $e(G) \ge 3\ell-2$, then $G$ is $\SZ_\ell$ by Theorem \ref{thm:small-SZl-other}\ref{thm:SZl-small-4vertices}.
    
    \item If $\mu(G) \le \ell-2$ and $e(G) = 3\ell-3$, then the characterization is given precisely by Theorem \ref{thm:4-vertex-char}.
\end{itemize}

This covers all possibilities, completing the proof.
\end{proof}

When $\ell = 5$, the above theorem can be used to determine which $4$-vertex graphs belong to $S\Z_5$, which will be useful in proving that every planar graph with five edge-disjoint spanning trees is strongly $\mathbb{Z}_5$-connected. \Cref{thm:4-ver-character} leads directly to the following corollary.

\begin{corollary}\label{cor:4-vertex-SZ5}
Let $G$ be a graph with $v(G) = 4$, $e(G) = 12$, $\mu(G) \le 3$, and $\delta(G) \ge 4$.  
Then $G \notin S\Z_5$ if and only if $G$ is isomorphic to $W_1$ or $W_2$ (see Figure~\ref{fig:small graph with v(G)=4}).
\end{corollary}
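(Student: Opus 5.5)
The plan is to reduce Corollary~\ref{cor:4-vertex-SZ5} to Theorem~\ref{thm:4-vertex-char} with $\ell=5$, and then to enumerate by hand the $4$-vertex graphs that satisfy its two conditions. The hypotheses $v(G)=4$, $e(G)=12=3\ell-3$, $\mu(G)\le 3=\ell-2$ and $\delta(G)\ge 4=\ell-1$ are exactly those of Theorem~\ref{thm:4-vertex-char}. Hence $G\notin S\Z_5$ if and only if every $2$-vertex set $S$ has $d(S)=8$ and every vertex has odd degree. So it suffices to show that these two conditions, together with $\mu(G)\le 3$ and $\delta(G)\ge 4$, single out exactly $W_1$ and $W_2$ up to isomorphism.

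First, translate the cut conditions into multiplicities. Write $m_{ij}=\mu_G(v_i,v_j)$. For $S=\{v_1,v_2\}$ we have $d(S)=12-m_{12}-m_{34}$, so $d(S)=8$ means $m_{12}+m_{34}=4$. Similarly $m_{13}+m_{24}=4$ and $m_{14}+m_{23}=4$, and then the total automatically equals $12$. Each pair of opposite edges therefore has multiplicities $(a,4-a)$ with $1\le a\le 3$, since $\mu\le 3$.

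Next, impose the degree conditions. The degree of $v_1$ is $m_{12}+m_{13}+m_{14}$, and the other degrees follow from the complementary relations. Since the three pair sums are even, all degrees share one parity, so oddness only needs to be checked at $v_1$. Thus the number of odd entries among $m_{12},m_{13},m_{14}$ must be odd; equivalently, the triple of pair types drawn from $\{(1,3),(2,2)\}$ contains an odd number of $(1,3)$-type pairs.

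This leaves two cases. With exactly one pair of type $(1,3)$, the degrees lie in $\{5,7\}$, which satisfies $\delta\ge 4$. With all three pairs of type $(1,3)$, the degrees are $m_{12}+m_{13}+m_{14}$ and its analogues. When the $3$'s form a triangle, the degrees are $7,7,7,3$, which violates $\delta\ge 4$. When the $3$'s form a star or a path, every degree is at least $5$, and I would check which of these configurations are isomorphic to one another.

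The main obstacle is matching the surviving configurations, up to relabeling, to the pictures of $W_1$ and $W_2$, which I cannot see. I expect the answer to be two isomorphism classes, for instance the $(2,2,2,2,1,3)$-type graph and one all-$\{1,3\}$ class. I would carry out the isomorphism check carefully by recording the positions of the multiplicity-$3$ edges. If that check produces more than two classes, I would re-examine the $\delta\ge 4$ constraint and the isomorphisms before concluding.
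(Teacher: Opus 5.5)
Your route is the paper's: with $\ell=5$ the hypotheses put you exactly in Theorem~\ref{thm:4-vertex-char}. Your translation into $m_{12}+m_{34}=m_{13}+m_{24}=m_{14}+m_{23}=4$ and the parity reduction to $v_1$ are correct. The error is in the enumeration, which is all the corollary adds to the theorem. When all three opposite pairs have type $(1,3)$, each opposite pair carries exactly one edge of multiplicity $3$. So the three triple edges are pairwise non-disjoint, and in $K_4$ this forces them to form a triangle or a star $K_{1,3}$. These account for all $2^3=8$ choices: $4$ triangles and $4$ stars. Your ``path'' case cannot occur: triple edges along $v_1v_2v_3v_4$ would give $m_{12}+m_{34}=6$, i.e.\ $d(\{v_1,v_2\})=6\neq 8$. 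If you kept it, it would pass your degree check (degrees $5,7,7,5$) and give a third class. That class has degrees $7,7,5,5$ together with multiplicities $3,3,3,1,1,1$, a combination neither genuine class below has, so your count would contradict the statement. Your fallback of re-examining $\delta\ge 4$ and the isomorphisms would not find this, because the mistake is in the opposite-pair constraint.

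With the triangle excluded by $\delta\ge 4$ and the path excluded as above, the enumeration closes at once.
\begin{itemize}
\item The star case is a vertex joined by triple edges to three vertices spanning a simple triangle, with degrees $9,5,5,5$; this is $W_1$.
\item The one-$(1,3)$-pair case is determined by where the triple edge sits: its opposite edge is simple and the other four edges are double, with degrees $7,7,5,5$; this is $W_2$.
\end{itemize}
Since $S_4$ acts transitively on the vertices and on the edges of $K_4$, each case is a single isomorphism class. The two classes are distinguished by their degree sequences, so the non-$\mathrm{S}\mathbb{Z}_5$ graphs are exactly $W_1$ and $W_2$. You should state this identification as part of the proof rather than as an expected outcome.
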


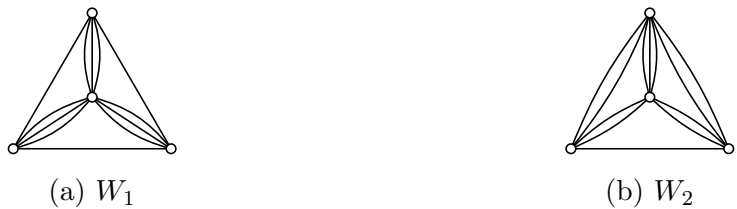
\begin{figure}[!htbp]
    \centering
	
    \begin{subfigure}[t]{0.45\textwidth}
        \centering
\begin{tikzpicture}[scale=0.6]			
    \draw [line width=0.6pt, black] (0,3) to (-1.73,0); 
    \draw [line width=0.6pt, black] (0,3) to (1.73,0); 
    \draw [line width=0.6pt, black] (-1.73,0) to (1.73,0); 

    \draw [line width=0.6pt, black] (0,1.14) to (1.73,0); 
    \draw [bend left=18,line width=0.6pt, black] (0,1.14) to (1.73,0); 
    \draw [bend right=18,line width=0.6pt, black] (0,1.14) to (1.73,0); 

    \draw [line width=0.6pt, black] (0,1.14) to (-1.73,0); 
    \draw [bend left=18,line width=0.6pt, black] (0,1.14) to (-1.73,0); 
    \draw [bend right=18,line width=0.6pt, black] (0,1.14) to (-1.73,0); 

    \draw [line width=0.6pt, black] (0,1.14) to (0,3); 
    \draw [bend left=18,line width=0.6pt, black] (0,1.14) to (0,3); 
    \draw [bend right=18,line width=0.6pt, black] (0,1.14) to (0,3); 

    \draw [fill=white,line width=0.6pt] (0,3) node[above] {} circle (3pt) ; 
    \draw [fill=white,line width=0.6pt] (-1.73,0) node[left] {} circle (3pt) ; 
    \draw [fill=white,line width=0.6pt] (1.73,0) node[right] {} circle (3pt) ; 
    \draw [fill=white,line width=0.6pt] (0,1.14) node[below=2mm] {} circle (3pt); 
\end{tikzpicture}
        \caption{$W_1$}
        \label{fig:W1}
    \end{subfigure}
    \begin{subfigure}[t]{0.45\textwidth}
        \centering
\begin{tikzpicture}[scale=0.6]			
    \draw [line width=0.6pt, black] (0,3) to (0,1.14); 
    \draw [bend left=16,line width=0.6pt, black] (0,3) to (0,1.14); 
    \draw [bend right=16, line width=0.6pt, black] (0,3) to (0,1.14); 
    
    \draw [bend right=8, line width=0.6pt, black] (0,3) to (-1.73,0); 
    \draw [bend left=8, line width=0.6pt, black] (0,3) to (-1.73,0); 
    
    \draw [bend right=8, line width=0.6pt, black] (0,3) to (1.73,0); 
    \draw [bend left=8, line width=0.6pt, black] (0,3) to (1.73,0); 

    \draw [bend right=10, line width=0.6pt, black] (0,1.14) to (1.73,0); 
    \draw [bend left=10, line width=0.6pt, black] (0,1.14) to (1.73,0); 

    \draw [bend right=10, line width=0.6pt, black] (0,1.14) to (-1.73,0); 
    \draw [bend left=10, line width=0.6pt, black] (0,1.14) to (-1.73,0); 

    \draw [line width=0.6pt, black] (-1.73,0) to (1.73,0); 

    \draw [fill=white,line width=0.6pt] (0,3) node[above] {} circle (3pt) ; 
    \draw [fill=white,line width=0.6pt] (-1.73,0) node[left] {} circle (3pt) ; 
    \draw [fill=white,line width=0.6pt] (1.73,0) node[right] {} circle (3pt) ; 
    \draw [fill=white,line width=0.6pt] (0,1.14) node[below=2mm] {} circle (3pt); 
\end{tikzpicture}
        \caption{$W_2$}
        \label{fig:W2}
    \end{subfigure}    
    \caption{Two graphs $W_1$, and $W_2$.}
    \label{fig:small graph with v(G)=4}
\end{figure}

\section{An alternative proof of the characterization of 3-vertex \texorpdfstring{$\SZ_\ell$}{SZ-l} graphs}
\label{sec:3-vertex-SZl}
In this section, we present a simpler proof of the characterization of $3$-vertex $\SZ_\ell$ graphs, which was previously given in \cite{LSWW24}.  
We begin with a combinatorial lemma.

\begin{lemma}\label{lem:intersection}
Let $\Omega$ be a finite set with $|\Omega| = k$, and let $I_1, \dots, I_m \subseteq \Omega$.  
Then
\[
|I_1 \cap \cdots \cap I_m|
\ge |I_1| + \cdots + |I_m| - (m-1)k.
\]
\end{lemma}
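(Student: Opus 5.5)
The proof is by induction on $m$ using complements. For $m=1$ the inequality reads $|I_1|\ge |I_1|$, which is trivial.

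For the inductive step, we apply the elementary two-set bound. For any $A,B\subseteq\Omega$,
\[
|A\cap B| = |A|+|B|-|A\cup B| \ge |A|+|B|-k,
\]
since $|A\cup B|\le |\Omega| = k$.

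Now suppose the inequality holds for $m-1$ sets. Set $A = I_1\cap\cdots\cap I_{m-1}$ and $B=I_m$. The two-set bound together with the induction hypothesis gives
\[
|I_1\cap\cdots\cap I_m| \ge |A|+|I_m|-k \ge \bigl(|I_1|+\cdots+|I_{m-1}|-(m-2)k\bigr)+|I_m|-k,
\]
which is exactly $|I_1|+\cdots+|I_m|-(m-1)k$.

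A direct alternative is to pass to complements. Since $\Omega\setminus(I_1\cap\cdots\cap I_m)=\bigcup_i(\Omega\setminus I_i)$, the union bound gives
\[
k-|I_1\cap\cdots\cap I_m| \le \sum_{i=1}^m \bigl(k-|I_i|\bigr),
\]
and rearranging yields the claim.

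There is no real obstacle. The only point to check is that the two-set bound is used with the full ground set $\Omega$, so that $|A\cup B|\le k$ holds for every $A,B\subseteq\Omega$; this is what lets the induction proceed without any further hypotheses on the sets.
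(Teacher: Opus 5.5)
Your proof is correct and follows the same route as the paper: induction on $m$ via the two-set bound $|A\cap B|\ge |A|+|B|-k$. The paper first sets aside the case $\sum_i |I_i|\le (m-1)k$ separately, and your argument shows that step is unnecessary because the bound holds unconditionally; your complement/union-bound alternative is also valid and proves the lemma in one line.
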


\begin{proof}
If $|I_1| + \cdots + |I_m| \le (m-1)k$, the inequality holds trivially.  
Thus, assume $|I_1| + \cdots + |I_m| > (m-1)k$.

We proceed by induction on $m$.  
For $m=2$, the statement follows directly from the inclusion--exclusion principle.

Assume the inequality holds for $m-1$ sets, and let $A = I_1 \cap \cdots \cap I_{m-1}$.  
Since $|I_m| \le|\Omega|\le k$, the assumption implies
$|I_1| + \cdots + |I_{m-1}| > (m-2)k$, and hence
$|A| \ge |I_1| + \cdots + |I_{m-1}| - (m-2)k$ by the induction hypothesis.
Consequently, $|A| + |I_m| > k$, which yields
$|A \cap I_m| \ge |A| + |I_m| - k$.
Substituting the lower bound for $|A|$ completes the proof.
\end{proof}

\begin{theorem}[\cite{LSWW24}]\label{thm:3-vertex-SZ-later}
A graph $G$ with $v(G)=3$ belongs to $S\mathbb{Z}_\ell$ if and only if it contains
$\ell-1$ edge-disjoint spanning trees. Equivalently,
$G \in S\mathbb{Z}_\ell$ if and only if $\delta(G)\ge \ell-1$ and
$e(G)\ge 2\ell-2$.
\end{theorem}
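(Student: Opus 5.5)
The necessity direction is immediate from Proposition~\ref{prop:SZ5-property}\ref{thm:l-1spanning}. For a $3$-vertex graph $T_{a,b,c}$, the Nash-Williams--Tutte condition reduces to $e(G)\ge 2\ell-2$ together with $d(v)\ge \ell-1$ for each $v$. This gives the equivalence of the two formulations. So the work lies in sufficiency: assume $\delta(G)\ge \ell-1$ and $a+b+c\ge 2\ell-2$, and produce a $\beta$-orientation for every $\mathbb{Z}_{2\ell}$-boundary $\beta$.

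I would first reduce to $\mu(G)\le \ell-2$. Suppose some pair carries at least $\ell-1$ parallel edges. That subgraph $aK_2$ lies in $\mathrm{S}\mathbb{Z}_\ell$ by Theorem~\ref{thm:small-SZl-other}\ref{thm:SZl-small-2vertices}, and I would want a self-contained check of this: a $2$-vertex graph with $a\ge\ell-1$ edges realizes every net outdegree $a-2i\in\{-a,\dots,a\}$ of the right parity, and these cover all residues mod $2\ell$ of that parity. Contracting that pair leaves a $2$-vertex graph with at least $\delta(G)\ge\ell-1$ edges. This contracted graph is therefore also in $\mathrm{S}\mathbb{Z}_\ell$, and Proposition~\ref{prop:SZ5-property}\ref{thm:SZl-contraction-reserve} finishes. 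Edges can also be deleted freely by Proposition~\ref{prop:SZ5-property}\ref{thm:SZl-contraction}, so I may assume $a+b+c=2\ell-2$. With $\mu\le\ell-2$, each of $a,b,c\ge 2$, and the degrees are $a+b$, $b+c$ and $c+a$.

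The main step, where I expect the difficulty to lie, is to construct the orientation directly, using Lemma~\ref{lem:intersection}. Label the vertices $x,y,z$, with $a=\mu(x,y)$, $b=\mu(y,z)$ and $c=\mu(z,x)$. Orienting the three bundles means choosing integers $p\in\{-a,-a+2,\dots,a\}$, $q\in\{-b,\dots,b\}$ and $r\in\{-c,\dots,c\}$ as the net flows $x\to y$, $y\to z$ and $z\to x$. The requirement is then
\[
p-r\equiv\beta(x),\qquad q-p\equiv\beta(y) \pmod{2\ell}.
\]
The third vertex condition follows automatically from $\sum\beta\equiv 0$.

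Fix $p$. Then $r$ must lie in a prescribed residue class and $q$ in another. Equivalently, the map $p\mapsto(p-\beta(x),\,p+\beta(y))$ must hit the set of achievable $r$-residues times the set of achievable $q$-residues. So I would work in $\Omega$, the set of $\ell$ residues mod $2\ell$ of the parity of $a$, and take three subsets:
\begin{itemize}
\item $I_1$, the achievable $p$-values, of size $\min(a+1,\ell)$;
\item $I_2$, the translate $\beta(x)+\{\text{achievable } r\}$, of size $\min(c+1,\ell)$;
\item $I_3$, the translate $-\beta(y)+\{\text{achievable } q\}$, of size $\min(b+1,\ell)$.
\end{itemize}
Parities are consistent because $\beta(v)\equiv d(v)\pmod 2$. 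Any element of $I_1\cap I_2\cap I_3$ yields a valid orientation. Lemma~\ref{lem:intersection} with $m=3$ and $k=\ell$ gives
\[
|I_1\cap I_2\cap I_3|\ge (a+1)+(b+1)+(c+1)-2\ell = 1
\]
when no minimum is truncated. If some term is truncated to $\ell$, that set is all of $\Omega$, and the two-set bound $(a+1)+(c+1)-\ell\ge 1$ (or its analogue) applies, using $d(v)\ge\ell-1$.

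The delicate points are the parity bookkeeping, which makes each achievable set an arithmetic progression of step $2$ within a single parity class of size exactly $\ell$, and this truncation case. I expect both to be routine once they are set up carefully.
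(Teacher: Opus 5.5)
Your proposal is correct and is essentially the paper's proof: the paper likewise encodes an orientation by the net flows on the three parallel classes, places the same three translated step-$2$ progressions inside a single parity class of $\mathbb{Z}_{2\ell}$ of size $\ell$, and applies Lemma~\ref{lem:intersection} with $m=3$. Your preliminary reduction to $\mu(G)\le\ell-2$ and $e(G)=2\ell-2$ is unnecessary, since the bound works directly with $e(G)\ge 2\ell-2$. Your explicit treatment of the truncated case via $d(v)\ge\ell-1$ fills in a step that the paper's terse final estimate leaves implicit. If you keep the reduction, delete edges from a largest bundle so that $\delta\ge\ell-1$ is preserved.
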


\begin{proof}
Let $V(G)=\{v_0,v_1,v_2\}$, and for each $i\in\{0,1,2\}$ (indices taken modulo $3$),
let $\mu_i$ denote the multiplicity of the edge between $v_i$ and $v_{i+1}$. For a subset $A\subseteq \mathbb{Z}_{2\ell}$ and $c\in \mathbb{Z}_{2\ell}$,
write $A+c:=\{a+c:\ a\in A\}$.

For each $i$, define
\[
J_i:=\{\,t\in\mathbb{Z}:\ -\mu_i\le t\le \mu_i,\ t\equiv \mu_i\pmod{2}\,\},
\]
and let
\[
I_i:=\{\,t\bmod 2\ell:\ t\in J_i\,\}\subseteq \mathbb{Z}_{2\ell}.
\]
Note that $|J_i|=\mu_i+1$, and hence
$|I_i|\ge \min\{\mu_i+1,\ell\}$.

Let $\beta$ be an arbitrary $\mathbb{Z}_{2\ell}$-boundary of $G$, and write
$\beta(v_i)=\beta_i$.
Then a $\beta$-orientation exists if and only if there exist elements
$x_i\in I_i$ satisfying
\[
x_i-x_{i-1}\equiv \beta_i \pmod{2\ell}
\qquad \text{for } i=0,1,2.
\]
These conditions are equivalent to the existence of an element
$x_0\in I_0\subseteq \Z_{2l}$ such that
\[
x_0\equiv x_1-\beta_1\equiv x_2+\beta_0 \pmod{2\ell}.
\]
Thus, it suffices to show that
\[
I_0\cap (I_1-\beta_1)\cap (I_2+\beta_0)\neq \varnothing .
\]

Since $\beta_i\equiv d(v_i)=\mu_i+\mu_{i-1}$, we have
\[
\mu_0\equiv \mu_1-\beta_1\equiv \mu_2+\beta_0 \pmod{2\ell}.
\]
Let
\[
\Omega:= I_0\cup (I_1-\beta_1)\cup (I_2+\beta_0)\subseteq \mathbb{Z}_{2\ell}.
\]
Then $|\Omega|\le \ell$.

Because $e(G)=\mu_0+\mu_1+\mu_2\ge 2\ell-2$, applying
Lemma~\ref{lem:intersection} yields
\begin{align*}
|I_0\cap (I_1-\beta_1)\cap (I_2+\beta_0)|
&\ge |I_0|+|I_1-\beta_1|+|I_2+\beta_0|-2\ell \\
&\ge \sum_{i=0}^2 \min\{\mu_i+1,\ell\}-2\ell \\
&\ge (2\ell-2)-(2\ell-3) \\
&\ge 1.
\end{align*}
Therefore, such an element $x_0$ exists, and hence $G$ admits a
$\beta$-orientation.
\end{proof}

\bibliographystyle{abbrv}
\bibliography{reference}

\end{document}